\def\showauthornotes{1}
\newcommand{\Authornote}[2]{{\sf\small\color{red}{[#1: #2]}}}
\newcommand{\Authornote}[2]{}
\newtheorem{theorem}{Theorem}
\newtheorem{lemma}{Lemma}
\newtheorem{corollary}{Corollary}
\newtheorem{proposition}{Proposition}
\theoremstyle{definition}
\newtheorem{example}{Example}
\newtheorem{definition}{Definition}
\def\E{\mathbb{E}}
\def\T{\mathbb{T}}
\def\C{\mathbb{C}}
\def\F{\mathbb{F}}
\def\e{\epsilon}
\def\a{\alpha}
\newcommand{\abar}{\bar{a}}
\newcommand{\bbar}{\bar{b}}
\newcommand{\xbar}{\bar{x}}
\newcommand{\ybar}{\bar{y}}
\newcommand{\calL}{\mathcal{L}}
\newcommand{\calM}{\mathcal{M}}
\newcommand{\epboundtrivregpairs}{(1/k)^3}
\newcommand{\epindregtoy}{\epsilon^{1/2}}
\newcommand{\epboundfact}{(1/4)^{D^2}}
\newcommand{\epgoodlemtoy}[2]{4{#1}^{1/2{#2}}}
\newcommand{\epmainthmff}[2]{4{#1}^{1/2{#2}}}
\newcommand{\epkeylemtoy}[2]{4{#1}^{1/2{#2}}}
\newcommand{\boundkeylemtoy}[1]{(1/{#1})^6}
\newcommand{\boundgoodlemtoy}[1]{(1/{#1})^6}
\newcommand{\eptrivregpairs}[2]{4{#1}^{1/{#2}}}
\newenvironment{proofof}[1]{\indent{\itshape Proof of #1}:~~}{\qed}
\begin{document}

\title[Stable arithmetic regularity in the finite-field model]{Stable arithmetic regularity in the finite-field model}

\author{C. Terry}

\author{J. Wolf}

\address{Department of Mathematics, University of Maryland, College Park,
MD 20742, USA}

\email{cterry@umd.edu}

\address{School of Mathematics, University of Bristol, Bristol BS8 1TW, UK}

\email{julia.wolf@bristol.ac.uk}

\date{}

\begin{abstract}
The arithmetic regularity lemma for $\mathbb{F}_p^n$, proved by Green in 2005, states that given a subset $A\subseteq \F_p^n$, there exists a subspace $H\leq \F_p^n$ of bounded codimension such that $A$ is Fourier-uniform with respect to almost all cosets of $H$. It is known that in general, the growth of the codimension of $H$ is required to be of tower type depending on the degree of uniformity, and that one must allow for a small number of non-uniform cosets.  

Our main result is that, under a natural model-theoretic assumption of stability, the tower-type bound and non-uniform cosets in the arithmetic regularity lemma are not necessary.  Specifically, we prove an arithmetic regularity lemma for $k$-stable subsets $A\subseteq \mathbb{F}_p^n$ in which the bound on the codimension of the subspace is a polynomial (depending on $k$) in the degree of uniformity, and in which there are no non-uniform cosets. This result is an arithmetic analogue of the stable graph regularity lemma proved by Malliaris and Shelah. \end{abstract}

\maketitle


\section{Introduction}\label{sec:intro}

An important theme within model theory is the search for structural dichotomies among first-order theories.  Such dichotomies are called ``dividing lines.''  One of the earliest and most useful dividing lines is the notion of model-theoretic stability.  In particular, a theory is stable if no formula has the so-called \emph{order property}.  Stable theories have been fundamental to model theory since the early work of Shelah \cite{Shelah:1990ti}, who demonstrated that, in the setting of infinite structures, the stable/unstable diving line corresponds to the presence/absence of certain important global properties.

A finitary manifestation of this phenomenon was established by Malliaris and Shelah \cite{Malliaris:2014go}, who proved a version of Szemer\'edi's regularity lemma for stable graphs. Informally speaking, a graph is $k$-stable if it does not contain any half-graphs of height $k$. These half-graphs are well known \cite[Section 1.1]{Conlon:2012es} to be the reason why regular partitions obtained from Szemer\'edi's regularity lemma need to allow for the existence of irregular pairs. 

Malliaris and Shelah found that by excluding the presence of half-graphs of size larger than $k$, they could not only rule out the existence of irregular pairs, but reduce the bound on the number of parts in the regular partition to a function that is a polynomial (depending on $k$) in the degree of regularity. For comparison, Gowers \cite{Gowers:1997fh} had shown that in general this bound is of tower type. Moreover, in the case of stable graphs the density of the induced subgraphs between any two parts of the partition can be guaranteed to be arbitrarily close to 0 or 1, yielding additional structural information about the graph.  Thus Malliaris and Shelah proved that global structural properties of finite graphs (namely strong regularity lemmas) can be derived from the local combinatorial property of omitting half-graphs above a certain height.

In the wake of countless successful applications of Szemer\'edi's regularity lemma to problems across mathematics and theoretical computer science, a first ``arithmetic" regularity lemma was proved by Green \cite{Green:2005kh} in 2005. In its simplest form, when $G=\F_p^n$ for a small fixed prime $p$, it states that given any subset $A\subseteq \F_p^n$, there exists a subspace $H$ of bounded codimension such that $A$ is uniform with respect to almost all cosets of $H$, in the sense that its restricted indicator function has vanishingly small Fourier transform. Just like in the graph regularity lemma, it was shown that the growth of the codimension of $H$ was in general required to be of tower type (see \cite{Green:2005kh}, with a slight improvement in \cite{Hosseini:2014tv}).   

In \cite{Green:2005kh}, the arithmetic regularity lemma was used to prove a so-called ``removal lemma" for finite abelian groups, which in turn has seen numerous applications.\footnote{In fact, it turned out that the arithmetic removal lemma could be deduced directly from the graph regularity lemma, see \cite{Kral:2008wg}.} Not long afterwards, a number of more general regularity-type statements were established in which the requirement of Fourier uniformity was replaced by suitable higher-order equivalents in terms of the Gowers norms, see for example \cite{Gowers:2011et,Green:2010fk}.

In view of the above, it is natural to ask what kinds of sets permit efficient arithmetic regularity lemmas.  Addressing this question, the main result of this paper is an efficient arithmetic regularity lemma for stable subsets of finite vector spaces over fields of a fixed prime order. We shall say, rather informally for now, that a subset $A$ of a finite abelian group $G$ is $k$-stable if there do not exist sequences $a_1,\ldots, a_k,b_1,\ldots, b_k\in G$ such that $a_i+b_j\in A$ if and only if $i\leq j$. 

\begin{theorem}[Main result]\label{thm:mainff}
For all $\epsilon\in (0,1)$, $k\geq 2$, and primes $p$, there is $n_0 = n_0(k,\epsilon,p)$ such that the following holds for all $n\geq n_0$. Suppose that $G:=\mathbb{F}_p^n$ and that $A\subseteq G$ is $k$-stable. Then there is a subspace $H\leqslant G$ of codimension at most $O_k(\epsilon^{-O_k(1)})$ such that for any $g\in G$, either $|(A-g)\cap H|\leq \epsilon |H|$ or $|H\setminus (A-g)|\leq \epsilon|H|$.
\end{theorem}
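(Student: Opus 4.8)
The plan is to combine the Fourier-analytic energy-increment argument behind Green's arithmetic regularity lemma with the Malliaris--Shelah principle that $k$-stability uniformly bounds the complexity of the relevant ``definable'' sets. Encode $A$ by the relation $R(x,y):\iff x+y\in A$ on $G$: a half-graph of height $m$ for $R$ is precisely a configuration $a_1,\dots,a_m,b_1,\dots,b_m$ with $a_i+b_j\in A\iff i\le j$, so $A$ being $k$-stable means that $R$, and equally the complementary relation, omits half-graphs of height $k$. The \emph{decisive} step is a combinatorial consequence of this: there is $N=N(k)$, depending only on $k$, such that one cannot find a strictly decreasing chain of subspaces $G=H_0>H_1>\dots>H_N$ and cosets $C_t=g_t+H_t$ such that, for each $t$, $A$ has density in $[\epsilon,1-\epsilon]$ on $C_t$ while being Fourier-uniform on $C_t$ with respect to the partition of $C_t$ into cosets of $H_{t+1}$. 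The proof of this should proceed in the spirit of \cite{Malliaris:2014go}: ``intermediate density but Fourier-mixing'' at level $t$ is the arithmetic form of a non-homogeneous, non-improvable pair, and it allows one to adjoin one further pair $(a_t,b_t)$ to an ordered configuration built at the levels $<t$; after $N$ levels one obtains a half-graph of height $N>2k$, a contradiction. Establishing this extraction --- in particular arranging that the two-point control furnished by Fourier-uniformity suffices to keep the relevant partial intersections of translates of $A$, and of its complement, of positive density at every level --- is the main obstacle.

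Granting this, run the energy increment, which now terminates within $N(k)$ rounds. Maintain a subspace $H$, initially $H=G$. If every coset of $H$ has $A$-density outside $[\epsilon,1-\epsilon]$ we are done. Otherwise collect the characters $\gamma$ with $|\widehat{1_A}(\gamma)|\ge\delta$ for a suitable threshold $\delta=\delta(H,\epsilon,p)$; by Parseval, $\sum_\gamma|\widehat{1_A}(\gamma)|^2=|A|/|G|\le 1$, so there are at most $\delta^{-2}$ of them, and replacing $H$ by its intersection with their kernels yields $H'$ with $\codim H'\le\codim H+\delta^{-2}$ on which $1_A$ is $\delta$-Fourier-uniform relative to every coset's own density (the standard computation transferring global uniformity to coset-wise uniformity). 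Any coset of $H'$ that still has intermediate density is then, together with the partition into cosets of the next refinement, exactly of the ``intermediate-but-mixing'' type fed to the combinatorial step; since a chain of such data of length $N(k)+1$ is impossible, the process halts within $N(k)$ rounds with every coset $\epsilon$-full or $\epsilon$-empty, which is the conclusion (the two cases saying that $A$ is almost absent from, respectively almost fills, the coset $g+H$). For the quantitative bound one checks that $\delta$ must be taken polynomially small in the relative density $|H|/|G|=p^{-\codim H}$ at each round; unwinding the recursion over the $N(k)$ rounds gives $\codim H=O_k(\epsilon^{-O_k(1)})$, and $n_0(k,\epsilon,p)$ is chosen just large enough that this quantity is smaller than $n$.

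Two remarks on the obstacle. First, although a half-graph of height $N$ involves $N$-fold patterns $\bigcap_{s\le t}(A-a_s)$, at each level one imposes only one new translate against an already-fixed target set of controlled density, so two-point Fourier control is the right tool --- provided the chain $H_0>\dots>H_N$ is genuinely strictly decreasing, which the energy increment guarantees since each round uses a nontrivial character. Second, as in the graph case one needs both the ``$\in A$'' and the ``$\notin A$'' sides of the half-graph; this is costless, since $A$ is $k$-stable if and only if $G\setminus A$ is, so the combinatorial step may be run symmetrically in $A$ and its complement.
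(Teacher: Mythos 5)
Your high-level shape is right --- stability should bound the number of rounds of an increment argument, and failure should produce an instance of the order property --- but the step you yourself flag as ``the main obstacle'' is precisely the content of the proof, and the linear-chain version you propose is not the configuration that works. The paper does not extract a half-graph directly from a decreasing chain of intermediate-but-uniform cosets; it builds a binary \emph{tree} indexed by $2^{\le d}$ (Definition \ref{def:treebound}) and contradicts the tree bound of Theorem \ref{thm:treefact}. The mechanism that keeps the partial intersections of translates of $A$ and $\neg A$ alive from one level to the next is not ``two-point Fourier control keeps them of positive density''; positive density decays geometrically and would only survive $O(\log(1/\epsilon))$ levels. Instead one needs: (i) intersections of stable sets are stable with an explicit bound (Lemma \ref{lem:stillstable1}), so the sets $X_\eta$ built along a branch are $f^t(k)$-stable; and (ii) the dichotomy that a \emph{stable} set which is Fourier-uniform on a large subspace has density near $0$ or $1$ there (Proposition \ref{lem:goodlemtoy}, proved by embedding a half-graph into a regular pair of intermediate density, Lemma \ref{lem:trivregpairs}). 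Combining (i) and (ii) with an $L^\infty$ density increment gives Proposition \ref{prop:keylemtoy}: a stable set of density $>\epsilon'$ on $H$ occupies a $(1-\epsilon')$-fraction of a coset of a subspace of codimension at most $\lfloor 2/\epsilon\rfloor$ in $H$. This upgrade from ``positive density'' to ``almost all of a coset of a subspace'' is what allows the construction to run for the full $d(k)$ levels, and it is absent from your sketch. Note also that for the genuine Fourier notion of uniformity your ``decisive step'' is vacuous with $N=1$: a single uniform translate of intermediate density already contradicts stability by (ii); the whole difficulty lives in the translates that are \emph{not} uniform, which is exactly what the tree construction handles.

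Second, the quantitative bookkeeping in your energy increment does not close. To transfer global Fourier smallness of $1_A$ to uniformity of $f^y_{H',A}$ on each coset of $H'$, the standard computation loses a factor $|(H')^\perp|=p^{\codim H'}$, so you need $\delta\le\epsilon\,p^{-\codim H'}$ while also $\codim H'\le\delta^{-2}$; these requirements are incompatible in a single round, and doing it round by round produces an iterated-exponential (tower-type) bound over the $N(k)$ rounds, not $O_k(\epsilon^{-O_k(1)})$. The paper's polynomial bound comes instead from the $L^\infty$ increment inside Proposition \ref{prop:keylemtoy}: the codimension grows by $1$ for each gain of $\epsilon/2$ in density, hence by at most $\lfloor 2/\epsilon\rfloor$ per level of the tree and by $d\lfloor 2/\epsilon\rfloor$ in total, with $\epsilon$ a fixed $k$-dependent power of the target parameter $\mu$. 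Finally, a small imprecision: $\neg A$ is $(k+1)$-stable rather than $k$-stable (Lemma \ref{lem:stillstable}); this is harmless in itself, but the stability constants of the sets $X_\eta$ degrade at every level and must be tracked against $D=f^d(k)$, so the symmetry between $A$ and its complement cannot be invoked for free.
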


In particular, this statement implies that the set $A$ is Fourier-uniform with respect to all cosets of $H$. 

Theorem \ref{thm:mainff} has all the features of the Malliaris-Shelah regularity result for graphs: the bounds are no longer of tower-type, there are no non-uniform translates, and in fact, the density of $A$ on each translate is either close to 1 or close to 0.

It is possible to deduce a much stronger structural result than in the case of graphs, however.  For instance, in Section \ref{sec:remarks} we deduce from Theorem \ref{thm:mainff} that a $k$-stable subset $A\subseteq \mathbb{F}_p^n$ must look approximately like a union of cosets of a subspace of bounded codimension.  

\begin{corollary}[Stable sets look like a union of cosets]\label{cor:ff}
For all $\epsilon\in (0,1)$, $k\geq 2$, and primes $p$, there is $n_1=n_1(k,\epsilon,p)$ and a polynomial $h(x)$ depending only on $k$ such that for each $n\geq n_1$ the following holds. Suppose that $G:=\mathbb{F}_p^n$ and that $A\subseteq G$ is $k$-stable. Then there is a subspace $H\leq G$ of codimension at most $h(1/\epsilon)$ and a set $I\subseteq G/H$ such that $|A\Delta \bigcup_{g+H\in I} (g+H)|\leq \epsilon |G|$.
\end{corollary}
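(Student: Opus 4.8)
The plan is to deduce Corollary~\ref{cor:ff} directly from Theorem~\ref{thm:mainff} by a covering argument on the quotient group $G/H$. First I would apply Theorem~\ref{thm:mainff} with a suitably rescaled parameter, say $\epsilon' = \epsilon/2$, to obtain a subspace $H \leqslant G$ of codimension at most $O_k((\epsilon')^{-O_k(1)}) = h(1/\epsilon)$ for an appropriate polynomial $h$ depending only on $k$, such that every coset $g+H$ satisfies either $|(A-g)\cap H| \leq \epsilon' |H|$ (the coset is "almost empty") or $|H \setminus (A-g)| \leq \epsilon'|H|$ (the coset is "almost full"). Note $n_1$ is simply $n_0$ from that theorem.

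Next I would partition the cosets: let $I \subseteq G/H$ be the set of cosets $g+H$ that are almost full, i.e. $|H \setminus (A-g)| \leq \epsilon'|H|$, and let the complement consist of the almost-empty cosets. (If a coset happens to satisfy both conditions, which can happen when $\epsilon' \geq 1/2$, assign it arbitrarily; since we take $\epsilon \in (0,1)$ and may assume $\epsilon$ small, or simply note the bound still holds, this causes no difficulty.) Then set $A' := \bigcup_{g+H \in I}(g+H)$. On each almost-full coset $g+H$, the symmetric difference $(A \Delta A') \cap (g+H)$ equals $(g+H)\setminus A = $ translate of $H \setminus (A-g)$, which has size at most $\epsilon'|H|$. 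On each almost-empty coset $g+H$, the symmetric difference $(A\Delta A')\cap(g+H)$ equals $A \cap (g+H) = $ translate of $(A-g)\cap H$, which also has size at most $\epsilon'|H|$. Summing over all $[G:H]$ cosets gives $|A \Delta A'| \leq \epsilon' |H| \cdot [G:H] = \epsilon'|G| = (\epsilon/2)|G| \leq \epsilon|G|$, as required.

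This argument is essentially routine, so there is no serious obstacle; the only points requiring minor care are the choice of the polynomial $h$ absorbing the $O_k$-constants and the factor of $2$ from rescaling, and the tie-breaking for cosets that are simultaneously almost-full and almost-empty. It is worth remarking that the conclusion is genuinely stronger than the graph analogue of Malliaris--Shelah: there, strong regularity only tells us each pair of parts has density near $0$ or $1$, whereas here every single coset of $H$ is either almost entirely inside or almost entirely outside $A$, so $A$ is globally approximated by a union of cosets rather than merely behaving pseudorandomly between parts.
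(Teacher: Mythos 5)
Your proposal is correct and follows essentially the same route as the paper: apply the main theorem to get an $\epsilon$-good subspace $H$, let $I$ be the almost-full cosets, and sum the per-coset errors (each at most $\epsilon|H|$) over the $[G:H]$ cosets to get $\epsilon|G|$. The rescaling to $\epsilon/2$ is unnecessary (the paper works directly with $\epsilon$ and the same bookkeeping already yields $\epsilon|G|$), but it is harmless.
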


We also show that a set that has small symmetric difference with a stable set enjoys a comparable regularity property to the stable set itself. To further emphasize how critical the notion of stability is to the understanding of regularity in the arithmetic context, we quantify the size of the order property in a construction by Green and Sanders \cite{Green:2015wp} which proved the necessity of a non-uniform coset.

We conclude the paper with a few further remarks and open problems. For example, we explain how Theorem \ref{thm:mainff} implies the existence of a regular partition of the Cayley graph $\Gamma(G,A)$ with the main features of the stable graph regularity lemma of Malliaris and Shelah, and the additional property that the parts in the regular partition are the cosets of a subspace. 


The structure of the remainder of the paper is as follows. We begin by introducing the model-theoretic notion of stability in more detail, by providing examples and proving some basic properties of stable sets in Section \ref{sec:stability}. We discuss the notion of regularity and the stronger variants we shall be using in Section \ref{sec:regularity}. The proof of Theorem \ref{thm:mainff}, which proceeds along the lines of Malliaris and Shelah, but with added arithmetic structural information, is presented in Section \ref{sec:proof}.  

We perform the argument in the simple setting of vector spaces over finite fields, which is an important toy model in arithmetic combinatorics \cite{Green:2005ic,Wolf:2014vy}. The plentiful presence of subgroups makes this a particularly pleasant setting to work in, and provides a blueprint for the case of general finite abelian groups which we intend to address in future work. There one has to make do with so-called approximate subgroups (or ``Bohr sets"), requiring non-trivial adaptations of the arguments in this paper.

After the first version of this paper was posted to the arXiv, the first author, Conant, and Pillay \cite{Conant:2017uw} used model-theoretic arguments to generalize some of the main results to arbitrary (large) finite groups. Specifically, they used an ultra-product construction along with deep tools from stable group theory to prove Theorem \ref{thm:mainff} and Corollary \ref{cor:ff} for stable subsets of arbitrary finite abelian groups, but without obtaining an explicit bound on the index of the subgroup. 
\vspace{8pt}

\textbf{Acknowledgements.} The authors wish to thank Terence Tao for his encouragement, and are grateful to Jacob Fox and Noga Alon for pointing out additional references. They are indebted to the Simons Institute for the Theory of Computing at UC Berkeley, where this project was conceived, for providing outstanding working conditions during the \emph{Pseudorandomness} programme in Spring 2017, and the London Mathematical Society for supporting the collaboration through a Scheme 4 grant (no. 41675).

\section{Stability}\label{sec:stability}

In this section we give some background on stability.  For completeness, we begin with the definition of stability in the usual model-theoretic context, although this will not be used directly in this paper. Suppose that $\calL$ is a first-order language, $T$ is a complete first-order theory, and $\xbar=(x_1,\ldots, x_m)$, $\ybar=(y_1,\ldots, y_n)$ are finite tuples of variables.  Then given an $\calL$-formula $\varphi(\xbar;\ybar)$ and an integer $k\geq 1$, we say that $\varphi(\xbar;\ybar)$ has the \emph{$k$-order property with respect to $T$} if there is a model $\calM\models T$, $\abar_1,\ldots, \abar_k\in M^m$, and $\bbar_1,\ldots, \bbar_k\in M^n$ such that $\calM\models \varphi(\abar_i,\bbar_j)$ if and only if $i\leq j$.  We say that $\varphi$ has the \emph{order property with respect to $T$} if it has the $k$-order property for all integers $k\geq 1$.  Then $T$ is \emph{stable} if no formula has the order property with respect to $T$.  

Stability is an example of what is called a \emph{combinatorial dividing line}, that is, a combinatorial property of formulas with interesting global consequences. 
It has strong structural implications for a theory $T$, and there is a vast literature exploring this phenomenon. To the reader unfamiliar with stability we recommend \cite{Baldwin:2017wm,Pillay:oRBBGvRc} as a starting point. The website \url{http://forkinganddividing.com} is a handy guide for exploring this area of research in more detail.  

The present paper fits into an emerging trend which considers the implications of dividing lines in the setting of \emph{finite} structures (as opposed to infinite structures where they are usually studied).  Results to date have mostly focused on showing that certain local combinatorial restrictions on graphs (motivated by model theory) imply improved versions of Szemer\'edi's regularity lemma (which is a statement about the global structure of the graph). One of the first results along these lines is due to Malliaris and Shelah \cite{Malliaris:2014go}. In order to state it we need the following definition. For convenience, if $x\neq y$, we write $xy=\{x,y\}$.

\begin{definition}[$k$-stable graph]\label{def:stablegraph}
A graph $(V,E)$ has the \emph{$k$-order property} if there exist $a_1,\ldots, a_k,b_1,\ldots, b_k\in V$ such that $a_ib_j\in E$ if and only if $i\leq j$. If $(V,E)$ does not have the $k$-order property, then it is \emph{$k$-stable}.
\end{definition}

Malliaris and Shelah \cite{Malliaris:2014go} proved that for any $\epsilon>0$, if a large finite graph is $k$-stable, then it has an $\epsilon$-regular partition with no irregular pairs, with all pairs having density close to $0$ or $1$, and with the number of parts polynomial in $1/\epsilon$.  Subsequent work showed that other model-theoretic dividing lines also correspond to improved regularity lemmas, see for instance \cite{Alon:2007gu,Lovasz:2010tm,Chernikov:2016we,Chernikov:2015tt,Pillay:oRBBGvRc,Basu:2009cr}\footnote{The work in \cite{Chernikov:2016we,Chernikov:2015tt,Basu:2009cr} generalizes prior work in discrete geometry, see for instance \cite{Alon:2005hj,Fox:2012us}}.  

The goal of this paper is to continue this line of investigation, this time in the setting of finite groups.  Specifically we will consider the implications of stability on the global structure of distinguished sets in finite-dimensional vector spaces over a fixed prime field. From now on, let $G$ be a finite abelian group.\footnote{The subsequent definitions and simple lemmas are also valid in infinite groups.} Given $A\subseteq G$, the \emph{Cayley graph of $A$ in $G$, $\Gamma(G,A)$} is the graph with vertex set $G$ and edge set $\{xy: x+y\in A\}$. When the group $G$ is clear from the context, we will write $\Gamma_A$ for $\Gamma(G,A)$. 

\begin{definition}[$k$-stable subset]\label{def:stableset}
Given $A\subseteq G$, $A$ is said to have the \emph{$k$-order property} if the graph $\Gamma(G,A)$ has the $k$-order property. We say that $A$ is \emph{$k$-stable} if the graph $\Gamma(G,A)$ is $k$-stable. 
\end{definition}

Given $A\subseteq G$, let $\neg A:=G\setminus A$. Note that if $A$ is $1$-stable, then $A=\emptyset$. We will therefore restrict our attention to the case $k\geq 2$ throughout. We now state a few further facts, some of which we shall need later on, and some of which we believe are useful for the reader unfamiliar with the notion of stability. Our first lemma states that translates of a stable set $A$ (and their complements) are stable. 

\begin{lemma}[Translates and complements of stable sets are stable]\label{lem:stillstable}
Suppose $A\subseteq G$ is $k$-stable and $g\in G$.  Then $A+g$ is $k$-stable and $\neg A+g$ is $(k+1)$-stable.
\end{lemma}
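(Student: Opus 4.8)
The plan is to unwind Definitions~\ref{def:stablegraph} and~\ref{def:stableset} and reduce both assertions to a relabelling of witnesses to the order property; no real combinatorics is involved.

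First I would dispose of the translate. Suppose towards a contradiction that $A+g$ has the $k$-order property, so that there are $a_1,\ldots,a_k,b_1,\ldots,b_k\in G$ with $a_i+b_j\in A+g$ if and only if $i\leq j$. Replacing each $a_i$ by $a_i-g$ (equivalently, subtracting $g$ from each $b_j$) yields a sequence with $a_i'+b_j\in A$ if and only if $i\leq j$, i.e.\ a witness to the $k$-order property of $A$, contradicting $k$-stability. Hence $A+g$ is $k$-stable. I would record that this argument shows, with no change, that a translate of any $k'$-stable set is $k'$-stable, which will be reused below.

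Next I would treat the complement. By the previous paragraph applied to $\neg A$, it suffices to prove that $\neg A$ itself is $(k+1)$-stable; then $\neg A+g$ is automatically $(k+1)$-stable. Suppose towards a contradiction that $\neg A$ has the $(k+1)$-order property, witnessed by $a_1,\ldots,a_{k+1},b_1,\ldots,b_{k+1}\in G$ with $a_i+b_j\in\neg A$ if and only if $i\leq j$; equivalently $a_i+b_j\in A$ if and only if $i>j$. Now set $c_s:=b_s$ and $d_t:=a_{t+1}$ for $s,t\in\{1,\ldots,k\}$. Using commutativity of $G$, for all such $s,t$ we have $c_s+d_t=a_{t+1}+b_s\in A$ if and only if $t+1>s$, that is, if and only if $s\leq t$. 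Thus $(c_s)_{s\leq k}$ and $(d_t)_{t\leq k}$ witness the $k$-order property of $A$, contradicting $k$-stability. Therefore $\neg A$ is $(k+1)$-stable, and the lemma follows.

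There is no genuinely hard step here; the argument is pure bookkeeping. The one place to be slightly careful is the index shift in the complement case: negating $i\leq j$ produces $i>j$, and after interchanging the roles of the two sequences and shifting one index by $1$ this again becomes the half-graph pattern, but now of height only $k$ rather than $k+1$ --- which is precisely why the parameter must be allowed to increase from $k$ to $k+1$. It is also worth flagging explicitly that commutativity of $G$ is used (and is part of the standing hypothesis): the order property treats its two sequences asymmetrically, whereas the edge relation of the Cayley graph is symmetric, so swapping the roles of the $a$'s and $b$'s is only legitimate because $x+y=y+x$.
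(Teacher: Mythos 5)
Your proof is correct and follows essentially the same route as the paper: translate by relabelling one sequence, and for the complement shift one index by $1$ and swap the roles of the two sequences (the paper folds the $-g$ into the second sequence directly rather than quoting the translate case, but this is only a reorganization). Your explicit remark that the swap uses commutativity of $G$ is a point the paper leaves implicit.
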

\begin{proof}
Suppose that $A+g$ is not $k$-stable.  Then there are elements $a_1,\ldots, a_k,b_1,\ldots, b_k$ such that $a_i+b_j\in A+g$ if and only if $i\leq j$.  This implies that $a_i-g+b_j\in A$ if and only if $i\leq j$.  Let $a'_1:=a_1-g,\ldots, a'_k:=a_k-g$ and $b'_1:=b_1,\ldots, b_k':=b_k$.  We have thus shown that $a'_i+b_j'\in A$ if and only if $i\leq j$, contradicting the assumption that $A$ is $k$-stable.  Hence $A+g$ is $k$-stable. 

Now suppose that $\neg A+g$ is not $(k+1)$-stable.  Then there are elements $a_1,\ldots, a_{k+1}$, $b_1,\ldots, b_{k+1}$ such that $a_i+b_j\in \neg A+g$ if and only if $i\leq j$.  Let $a_1':=a_2,\ldots, a_k':=a_{k+1}$ and $b_1':=b_1-g,\ldots, b_k':=b_k-g$.  Then $a_i'+b_j'\in A$ if and only if $j\leq i$, contradicting the assumption that $A$ is $k$-stable.
\end{proof}

It is not difficult to see that both intersections and unions of stable sets are stable. The following lemma is a quantitative statement to this effect. 

\begin{lemma}[Intersections and unions of stable sets are stable]\label{lem:stillstable1}
Let $h(k,\ell):=(k+\ell)2^{k+\ell}+1$.  If $A_0\subseteq G$ is $\ell$-stable and $A_1\subseteq G$ is $k$-stable, then $A_0\cap A_1$ is $h(k,\ell)$-stable. 
\end{lemma}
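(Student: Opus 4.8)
The plan is to argue by contradiction, in the standard way that one proves stability (or more generally NIP) is preserved under Boolean combinations, but keeping track of quantities so as to produce the explicit bound $h(k,\ell)$. Suppose that $A_0\cap A_1$ has the $m$-order property, where $m:=h(k,\ell)=(k+\ell)2^{k+\ell}+1$, witnessed by $a_1,\dots,a_m,b_1,\dots,b_m\in G$ with $a_i+b_j\in A_0\cap A_1$ if and only if $i\leq j$. The key observation is that for every pair $i>j$ we have $a_i+b_j\notin A_0\cap A_1$, hence $a_i+b_j\notin A_0$ or $a_i+b_j\notin A_1$; this lets me define a $2$-colouring $c$ of the ``backward'' pairs by $c(i,j)=0$ if $a_i+b_j\notin A_0$ and $c(i,j)=1$ otherwise (so that $c(i,j)=1$ forces $a_i+b_j\notin A_1$).

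Next I would extract from $c$ a long, well-structured index set by an iterated majority (``greedy Ramsey'') argument: processing indices from largest to smallest, repeatedly pick the current largest surviving index $v_t$, record the majority colour $e_t\in\{0,1\}$ of $c(v_t,\cdot)$ over the surviving smaller indices, and discard the minority colour. Since $m$ exceeds $2^{k+\ell}$ with enormous room to spare, this survives $k+\ell$ rounds and yields indices $v_1>v_2>\cdots>v_{k+\ell}$ with colours $e_1,\dots,e_{k+\ell}$ such that $c(v_s,v_t)=e_s$ whenever $s<t$. (Equivalently one could cite the off-diagonal Ramsey bound $R(\ell,k)\le 2^{k+\ell-2}<m$.) By pigeonhole among the $k+\ell$ colours $e_t$, either at least $\ell$ of them equal $0$, or at least $k$ of them equal $1$.

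In the first case, take $\ell$ of the $v_t$ whose colour is $0$ and relabel them in increasing order as $j_1<\cdots<j_\ell$. Then for $p\le q$ we have $a_{j_p}+b_{j_q}\in A_0\cap A_1\subseteq A_0$ by the defining property of the original witnesses, while for $p>q$ the colouring gives $a_{j_p}+b_{j_q}\notin A_0$; here one must keep the orientation of pairs straight, since the ``failing'' index is the larger one. Hence $a_{j_p}+b_{j_q}\in A_0$ iff $p\le q$, so $A_0$ has the $\ell$-order property, contradicting $\ell$-stability of $A_0$. In the second case the symmetric argument with $A_1$ in place of $A_0$ and $k$ in place of $\ell$ contradicts $k$-stability of $A_1$. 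Since $z$ colours equal to $0$ and $o$ equal to $1$ with $z+o=k+\ell$ forces $z\ge\ell$ or $o\ge k$, one of the two cases always applies, and we are done.

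I expect the only delicate point to be the bookkeeping in the extraction step: making sure the monochromatic family genuinely yields an order-configuration in the correct direction, and verifying that the generous value $h(k,\ell)=(k+\ell)2^{k+\ell}+1$ really does absorb all the pigeonhole losses (a factor of $2$ per extracted index, plus the final split between the two colours). Neither is a serious obstacle; the bound is chosen loosely precisely so that these routine losses cause no trouble.
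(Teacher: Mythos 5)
Your proof is correct and is essentially the paper's argument: the same iterated-majority (greedy Ramsey) extraction over $k+\ell$ rounds, followed by the same pigeonhole split producing either an $\ell$-order configuration for $A_0$ or a $k$-order configuration for $A_1$. The only (cosmetic) difference is that you work directly with the intersection, colouring the pairs with $i>j$ according to which set \emph{fails} to contain $a_i+b_j$, whereas the paper first passes to unions via complementation and colours the pairs with $i\leq j$ according to which set \emph{does} contain $a_i+b_j$ --- a dual but equivalent bookkeeping that comfortably fits within the generous bound $h(k,\ell)$ in both cases.
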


\begin{proof} Since $A_0\cap A_1=\neg (A_0\cup A_1)$, by Lemma \ref{lem:stillstable}, it suffices to show that $A_0\cup A_1$ is $h'(k,\ell)$-stable, where $h'(k,\ell):=(k+\ell)2^{k+\ell}$.  The proof of the latter fact is a standard Ramsey argument. We include it here for the sake of completeness, since we were unable to locate a suitable reference in the literature.

Suppose towards a contradiction that $A_0\cup A_1$ is not $h'(k,\ell)$-stable. Then there are $a_1,\ldots, a_{h'(k,\ell)},b_1,\ldots, b_{h'(k,\ell)}$ such that $a_i+b_j\in A_0\cup A_1$ if and only if $i\leq j$. Since $a_1+b_j\in A_0\cup A_1$ for all $1\leq j\leq h'(k,\ell)$, there is $i_1\in \{0,1\}$ and $D_1\subseteq \{b_j: a_1+b_j\in A_{i_1}\}$ such that $|D_1|\geq h'(k,\ell)/2$.  Let $D_1$ consist of $j_1<\ldots<j_{|D_1|}$, and define a new sequence $(a_1^1,\ldots, a^1_{|D_1|},b_1^1,\ldots, b_{|D_1|}^1):=(a_1,a_{j_2},\ldots, a_{j_{|D_1|}},b_{j_1},\ldots, b_{j_{|D_1|}})$.  By the pigeonhole principle, there is $i_2\in \{0,1\}$ and $D_2\subseteq \{b^1_j: j\geq 2, a^1_2+b^1_j\in A_{i_2}\}$ such that $|D_2|\geq |D_1|/2\geq h'(k,\ell)/4$.  Let $D_2$ consist of $s_1<\ldots<s_{|D_2|}$, and define a new sequence $(a_1^2,\ldots, a^2_{|D_2|},b_1^2,\ldots, b_{|D_2|}^2):=(a^1_1,a^1_2,a^1_{s_3},\ldots, a^1_{s_{|D_2|}},b^1_{s_1},\ldots, b^1_{s_{|D_1|}})$.   Continue this construction inductively. After $k+\ell$ steps, we will have constructed a new sequence $(a'_1,\ldots, a'_{t},b'_1,\ldots, b'_{t}):=(a^{k+\ell}_1,\ldots, a^{k+\ell}_{t},b^{k+\ell}_1,\ldots, b^{k+\ell}_{t})$ such that for each $1\leq j<s\leq t$, $a'_{s}+b'_j\notin A_0\cup A_1$ and for each $1\leq s \leq j\leq t$, $a'_{s}+b'_j\in A_{i_{s}}$, where $t\geq h'(k,\ell)/2^{k+\ell}=k+\ell$.  By the pigeonhole principle, either $|\{s: i_{s}=0\}|\geq \ell$ or $|\{s: i_{s}=1\}|\geq k$.  If $|\{s: i_{s}=0\}|\geq \ell$, delete all elements with indices not in $\{s: i_{s}=0\}$, and reindex the remaining elements, preserving their order, as $a^*_1,\ldots, a^*_{t^*},b^*_1,\ldots, b^*_{t^*}$.  Then $a^*_i+b^*_j\in A_{0}$ if and only if $i\leq j$, for each $1\leq i\leq j\leq t^*$, a contradiction since $t^*\geq \ell$ and $A_0$ is $\ell$-stable.  If, on the other hand, $|\{s: i_{s}=1\}|\geq k$, a similar argument yields a contradiction to the assumption that $A_1$ is $k$-stable. 
\end{proof}

At this point we owe the reader a first example of a stable set.

\begin{example}[Subgroups are 2-stable]\label{exa:subgroup}
Let $A\leqslant G$ be a subgroup of $G$. Assume that $a_1,a_2,b_1,b_2$ are such that $a_i+b_j\in A$ when $i\leq j$.  Then $(a_1+b_1)-(a_1+b_2)=b_1-b_2\in A-A=A$, and hence $a_2+b_1=(b_1-b_2)+(a_2+b_2)\in A+A=A$. This shows that $A$ is $2$-stable.
\end{example}

The converse is true under the additional assumption that $A\subseteq G$ contains $0$ and is symmetric: if such a set $A$ is 2-stable, then it must be a subgroup. To see this, fix any two elements $x,y\in A$.  Set $a_1:=-x$, $b_1:=x$, $a_2:=y$, and $b_2:=0$.  Then $a_1+b_1=0\in A$, $a_1+b_2=-x\in A$, and $a_2+b_2=y\in A$.  Since $A$ is $2$-stable, we must have $a_2+b_1=x+y\in A$.

In fact, it turns out that $k$-stable sets exhibit strong subgroup structure even for $k>2$, and a statement to this effect, Corollary \ref{cor:ff}, will be proved in Section \ref{sec:remarks}.

Taken in conjunction with Example \ref{exa:subgroup}, our next lemma shows that the bound in Lemma \ref{lem:stillstable1} can be improved when one of the sets is a subgroup. 

\begin{lemma}\label{lem:2stable}
If $A\subseteq G$ is $2$-stable and $B\subseteq G$ is $k$-stable, then $A\cap B$ is $k$-stable.
\end{lemma}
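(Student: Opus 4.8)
The plan is to argue by contradiction. Suppose $A\cap B$ has the $k$-order property, witnessed by elements $a_1,\ldots,a_k,b_1,\ldots,b_k\in G$ with $a_i+b_j\in A\cap B$ if and only if $i\leq j$. The key observation, which is the crux of the argument, is that the $2$-stability of $A$ is already strong enough to force $a_i+b_j\in A$ for \emph{all} pairs $(i,j)$, not merely those with $i\leq j$. Granting this, the failure of $a_i+b_j$ to lie in $A\cap B$ when $i>j$ must be caused entirely by $B$, and we recover a $k$-order pattern for $B$, contradicting its $k$-stability.

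To establish the key observation, fix $j<i$ and consider the four elements $a_j,a_i,b_j,b_i$. Since $j\leq j$, $j\leq i$ and $i\leq i$, the witnessing property gives that $a_j+b_j$, $a_j+b_i$ and $a_i+b_i$ all lie in $A\cap B$, hence in $A$. If in addition $a_i+b_j\notin A$, then putting $a_1':=a_j$, $a_2':=a_i$, $b_1':=b_j$, $b_2':=b_i$ produces a configuration with $a_s'+b_t'\in A$ exactly when $s\leq t$ for $s,t\in\{1,2\}$, that is, the $2$-order property for $A$, contradicting $2$-stability. Hence $a_i+b_j\in A$ whenever $i>j$, and combining this with the witnessing property we obtain $a_i+b_j\in A$ for all $i,j\in\{1,\ldots,k\}$.

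It remains to compare with $B$. When $i\leq j$ we have $a_i+b_j\in A\cap B\subseteq B$; when $i>j$ we have $a_i+b_j\in A$ but $a_i+b_j\notin A\cap B$, so $a_i+b_j\notin B$. Thus $a_i+b_j\in B$ if and only if $i\leq j$, so $B$ has the $k$-order property, contradicting the hypothesis. This proves that $A\cap B$ is $k$-stable.

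I do not anticipate a genuine obstacle: the only point needing a moment's care is verifying that the sub-configuration $a_j,a_i,b_j,b_i$ is a legitimate witness of the $2$-order property (in particular that the relevant elements are distinct), which is immediate because the original sequence already witnesses the $k$-order property of $A\cap B$. I would also remark that, taken together with Example~\ref{exa:subgroup}, this lemma shows that the intersection of any $k$-stable set with a subgroup of $G$ is again $k$-stable.
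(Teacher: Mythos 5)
Your proof is correct and follows essentially the same route as the paper's: both arguments use the $2$-stability of $A$ on the triple $a_i+b_i$, $a_i+b_j$, $a_j+b_j$ (for $i<j$) to force $a_j+b_i\in A$, and then transfer the full $k$-order pattern onto $B$. The only difference is cosmetic (you spell out the relabelling to $a_1',a_2',b_1',b_2'$ explicitly).
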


\begin{proof}
Suppose towards a contradiction that $a_1,\ldots, a_k, b_1,\ldots, b_k$ are elements in $G$ such that $a_i+b_j\in A\cap B$ if and only if $i\leq j$.  Fix $1\leq i<j\leq k$. Then all three of $a_i+b_i$, $a_i+b_j$, and $a_j+b_j$ must lie in $A$. Since $A$ is 2-stable, we must have that $a_j+b_i\in A$, which means that $a_j+b_i\notin B$.  It must therefore be the case that $a_i+b_j\in B$ if and only if $i\leq j$.  This contradicts the assumption that $B$ is $k$-stable.
\end{proof}

At this stage it is reasonable to enquire whether there are any non-trivial examples of $k$-stable sets with $k>2$. The following is an example of a set that has the $3$-order property but is $4$-stable. 

\begin{example}[$3$-order property but $4$-stable]
Let $n\geq 4$ and $A:=\{e_1,\ldots, e_n\}\subseteq \mathbb{F}_2^n$, where $e_i$ is the $i$th standard basis vector in $\mathbb{F}_2^n$. We first show that $A$ has the $3$-order property.  Let $b_1:=e_1, b_2:=e_2, b_3:=e_3$ and let $a_1:=0, a_2:=e_2+e_3, a_3:=e_3+e_4$.  We leave it to the reader to verify that for each $1\leq i,j\leq 3$, $a_i+b_j \in A$ if and only if $i\leq j$, and thus $A$ has the $3$-order property.

We now show that $A$ is $4$-stable.  Suppose towards a contradiction that there were distinct elements $a_1,\ldots, a_4,b_1,\ldots b_4$ such that $a_i+b_j\in A$ if and only if $i\leq j$. For each $i\leq j$, let $e_{ij}\in A$ be such that $a_i+b_j=e_{ij}$.  

Observe that by definition of $e_{ij}$, $b_2=a_1+e_{12}$, $b_3=a_1+e_{13}$, and $b_4=a_1+e_{14}$.  Since the elements $b_i$ are pairwise distinct, we must have that the elements $e_{12},e_{13},e_{14}$ are pairwise distinct.  Note further that
$$
\qquad a_1=b_2+e_{12}=b_3+e_{13}=b_4+e_{14}\qquad \hbox{ and  }\qquad a_2=b_2+e_{22}=b_3+e_{23}=b_4+e_{24}. \qquad
$$
Thus 
\begin{align*}
a_1+a_2&=b_2+e_{12}+b_2+e_{22}=e_{12}+e_{22},\\
a_1+a_2&=b_3+e_{13}+b_3+e_{23}=e_{13}+e_{23},\\
a_1+a_2&=b_4+e_{14}+b_4+e_{24}=e_{14}+e_{24}, 
\end{align*} 
from which it follows that $e_{12}+e_{22}=e_{13}+e_{23}=e_{14}+e_{24}$.  Since $e_{12}\neq e_{13}$, the first equality implies $e_{12}=e_{23}$ and $e_{22}=e_{13}$.  Since $e_{13}\neq e_{14}$, the second equality implies $e_{13}=e_{24}$ and $e_{23}=e_{14}$.  But now we have shown $e_{12}=e_{23}=e_{14}$, a contradiction.  Thus $A$ is $4$-stable.
\end{example}

Stable sets enjoy an interesting covering property. The following statement to this effect is a quantitative version of Lemma 5.1 in \cite{Poizat:2001vx}.

\begin{lemma}\label{lem:cover}
If $A\subseteq G$ is $k$-stable, then either $G$ is covered by $2k+1$ translates of $A$, or $G$ is covered by $2k+1$ translates of $\neg A$.
\end{lemma}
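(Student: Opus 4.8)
The plan is to prove the contrapositive. Suppose $G$ is not covered by $2k+1$ translates of $A$ and not covered by $2k+1$ translates of $\neg A$; we will construct elements $a_1,\dots,a_k,b_1,\dots,b_k\in G$ with $a_i+b_j\in A$ if and only if $i\leq j$, contradicting $k$-stability. First note that, taken together, the two hypotheses force $\emptyset\neq A\neq G$ (if $A=\emptyset$ then $\neg A=G$ is covered by one translate of itself, and dually), so in particular we may pick elements of both $A$ and $\neg A$ below. It is convenient to restate the hypotheses as: (a) for any $g_1,\dots,g_m\in G$ with $m\leq 2k+1$ there is $x$ with $x+g_i\in A$ for all $i$ (this is non-coverability by translates of $\neg A$); and (b) for any $g_1,\dots,g_m\in G$ with $m\leq 2k+1$ there is $y$ with $y+g_i\notin A$ for all $i$ (non-coverability by translates of $A$).

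The key point is the \emph{order} in which to build the configuration: construct it as $a_1,b_1,a_2,b_2,\dots,a_k,b_k$, i.e.\ in increasing index, alternating sides. With this ordering every new element faces a \emph{monochromatic} set of constraints against the elements already present: when $a_t$ is introduced, all previously chosen $b_j$ have $j<t$, so all the required relations are $a_t+b_j\notin A$; when $b_t$ is introduced, all previously chosen $a_i$ have $i\leq t$, so all the required relations are $a_i+b_t\in A$. Hence each step is a single application of (a) or (b). Concretely, having built $a_1,b_1,\dots,a_{t-1},b_{t-1}$, apply (b) with the $t-1$ shifts $\{b_j\}_{j<t}$ to choose $a_t$ with $a_t+b_j\notin A$ for all $j<t$; then apply (a) with the $t$ shifts $\{a_i\}_{i\leq t}$ to choose $b_t$ with $a_i+b_t\in A$ for all $i\leq t$. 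Since $t\leq k$, the number of shifts is always at most $k\leq 2k+1$, so (a) and (b) are applicable. Once all $2k$ elements have been built, $a_i+b_j\in A\iff i\leq j$ holds: for $i\leq j$ this was arranged at stage $j$, and for $i>j$ at stage $i$. This gives the desired contradiction, and hence the Lemma. (This is a quantitative form of the argument behind Lemma~5.1 in \cite{Poizat:2001vx}.)

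There is one minor bookkeeping point if one insists on the graph formulation of the order property in Definition~\ref{def:stablegraph}, namely that for $i\leq j$ one additionally needs $a_i\neq b_j$. This is handled by also forbidding the finitely many points $a_1,\dots,a_t$ when choosing $b_t$: since $\neg A\neq\emptyset$, each singleton $\{a_i\}$ lies in some translate of $\neg A$, so forbidding these points costs at most $t$ extra shifts in the application of (a), for a total of at most $2t\leq 2k\leq 2k+1$, still within budget. (The $a_i$ are then automatically pairwise distinct, as are the $b_j$: for $i<i'$ one has $a_i+b_i\in A$ but $a_{i'}+b_i\notin A$, and symmetrically for the $b_j$.) This is the only place the slack in the bound is used; in fact the argument shows that already non-coverability by $k-1$ translates of $A$ together with non-coverability by $2k$ translates of $\neg A$ suffices, so the statement is not tight. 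The construction is otherwise mechanical; the one thing that genuinely matters, and the step to get right, is choosing the interleaving order so that every constraint set is monochromatic — a careless order (say all $a_i$ first, then all $b_j$, or the opposite alternation) produces mixed constraints that cannot be met by invoking a single non-covering hypothesis at a time.
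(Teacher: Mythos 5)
Your proof is correct, and it handles the crux of the argument --- the diagonal entries $a_i+b_i$ --- differently from the paper. The paper's proof chooses $a_{\ell+1}$ and $b_{\ell+1}$ simultaneously, each constrained only by the previously built elements, so it has no control over whether $a_{\ell+1}+b_{\ell+1}$ lies in $A$; it therefore runs the greedy construction for $2k+1$ steps and then pigeonholes on the uncontrolled diagonal, extracting either a $k$-order configuration for $A$ or a $(k+1)$-order configuration for $\neg A$ (the latter contradicting stability via Lemma \ref{lem:stillstable}). Your interleaving $a_1,b_1,a_2,b_2,\dots$ instead makes $b_t$ depend on the already-chosen $a_t$, so the diagonal is pinned down at each stage and the $k$-order property for $A$ itself emerges directly after $k$ steps, with no pigeonhole and no appeal to Lemma \ref{lem:stillstable}. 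This buys a sharper quantitative conclusion (as you note, non-coverability by roughly $k$ translates of each set already suffices), and your extra bookkeeping ensuring $a_i\neq b_j$ for $i\leq j$ --- needed for the graph-theoretic formulation in Definition \ref{def:stablegraph} --- addresses a point the paper's proof silently glosses over. Both arguments share the same engine, namely the equivalence between ``$G$ is not covered by $A-g_1,\dots,A-g_m$'' and ``some $x$ satisfies $x+g_i\notin A$ for all $i$,'' so the difference is one of execution rather than of strategy.
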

\begin{proof} 
Suppose towards a contradiction that $G$ is not covered by $2k+1$ translates of $A$, and that $G$ is not covered by $2k+1$ translates of $\neg A$.  We first inductively build a sequence $a_0,\ldots, a_{2k}$, $b_0,\ldots, b_{2k}$ such that $a_i+b_j\notin A$ if $i<j$ and $a_i+b_j\in A$ if $j<i$.

Choose $a_0=b_0$ to be any element of $G$. Assume now that $0\leq \ell\leq 2k-1$, and suppose that we have inductively chosen $a_0,\ldots, a_{\ell}$, $b_0,\ldots, b_{\ell}$ so that for all $0\leq i, j\leq \ell$, $a_i+b_j\notin A$ if  $i<j$ and $a_i+b_j\in A$ if  $j<i$.  Since $G$ is not covered by $2k+1$ translates of $A$ nor by $2k+1$ translates of $\neg A$, and since $\ell+1\leq 2k+1$,
\begin{align*}
G\neq (A-a_0) \cup \ldots \cup (A-a_{\ell})\text{ and } G\neq (\neg A-b_0) \cup \ldots \cup (\neg A-b_{\ell}).
\end{align*}
Choose $b_{\ell+1}\in G\setminus ((A-a_0) \cup \ldots \cup (A-a_{\ell}))$ and $a_{\ell+1}\in G\setminus ((\neg A-b_0) \cup \ldots \cup (\neg A-b_{\ell}))$.  Note that for each $0\leq i\leq \ell$, $a_i+b_{\ell+1}\notin A$ and $a_{\ell+1}+b_i\in A$.  Combining this with the inductive hypothesis, we have that for all $0\leq i, j\leq \ell+1$, $a_i+b_j\notin A$ if $i<j$ and $a_i+b_j\in A$ if $j<i$. 

After $2k+1$ steps, we have elements $a_0,\ldots, a_{2k}$, $b_0,\ldots, b_{2k}$ such that for all $0\leq i,j\leq 2k$, $a_i+b_j\notin A$ if $i<j$ and $a_i+b_j\in A$ if $j<i$. By the pigeonhole principle, 
\begin{align*}
\text{ either }|\{0\leq i\leq 2k: a_i+b_i\in A\}|\geq k \text{ or }|\{0\leq i\leq 2k: a_i+b_i\notin A\}|\geq k+1.
\end{align*}
If $|\{0\leq i\leq 2k: a_i+b_i\in A\}|\geq k $, choose $i_1<\ldots<i_{k}$ in $\{0\leq i\leq 2k: a_i+b_i\in A\}$ and set $a_1':=a_{i_1},\ldots, a_k':=a_{i_k}$ and $b_1':=b_{i_1},\ldots, b_k':=b_{i_{k}}$. Then for all $1\leq i,j\leq k$, $a_i'+b_j'\in A$ if and only if $j\leq i$, contradicting our assumption that $A$ is $k$-stable. On the other hand, if $|\{0\leq i\leq 2k: a_i+b_i\notin A\}|\geq k+1$, an analogous argument shows that $\neg A$ has the $(k+1)$-order property, which is a contradiction by Lemma \ref{lem:stillstable}.
\end{proof}

As an immediate corollary, we obtain that when $A$ is stable, then either $A$ or $\neg A$ has small doubling. For the reader familiar with the Freiman-Ruzsa theorem \cite{Ruzsa:1999uq}, this provides further evidence towards the thesis that being stable is closely connected to being close to a subgroup (see Section \ref{sec:remarks}).

\begin{corollary}\label{cor:cover}
If $G$ is finite and $A\subseteq G$ is $k$-stable, then one of the following holds.
\begin{enumerate}[(a)]
\item $|A+A|\leq (2k+1)|A|$ or 
\item $|\neg A+\neg A|\leq (2k+1)|\neg A|$.
\end{enumerate}
\end{corollary}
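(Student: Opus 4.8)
The plan is to read this off immediately from Lemma~\ref{lem:cover}, which provides exactly the required dichotomy: since $G$ is finite and $A$ is $k$-stable, either $G$ is covered by $2k+1$ translates of $A$, or $G$ is covered by $2k+1$ translates of $\neg A$. I would simply split into these two cases.

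Suppose first that $G=(A+g_1)\cup\cdots\cup(A+g_{2k+1})$ for some $g_1,\ldots,g_{2k+1}\in G$. Taking cardinalities and applying the union bound, together with the fact that each translation map is a bijection, yields $|G|\leq\sum_{i=1}^{2k+1}|A+g_i|=(2k+1)|A|$. Since $G$ is a group, the sumset $A+A$ is contained in $G$, so $|A+A|\leq|G|\leq(2k+1)|A|$, which is conclusion~(a). The second case is entirely symmetric: if $G$ is covered by $2k+1$ translates of $\neg A$, the same counting gives $|G|\leq(2k+1)|\neg A|$, and $\neg A+\neg A\subseteq G$ then forces $|\neg A+\neg A|\leq(2k+1)|\neg A|$, which is conclusion~(b).

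There is no real obstacle here, as all the work has already been done in Lemma~\ref{lem:cover}; the only thing to notice is the trivial inclusion $S+S\subseteq G$ valid for any $S\subseteq G$. (If one wanted a marginally stronger statement, one could observe in the first case that $A+A$ is itself covered by the $2k+1$ translates $A+g_i$, but the displayed form is what is needed for the doubling bound and follows at once.)
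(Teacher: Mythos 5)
Your proof is correct and matches the paper's argument exactly: the paper's one-line proof likewise deduces $|G|\leq (2k+1)|A|$ or $|G|\leq (2k+1)|\neg A|$ from Lemma \ref{lem:cover} and implicitly uses the trivial inclusion of the sumset in $G$. You have simply written out the counting step that the paper leaves to the reader.
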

\begin{proof}
Lemma \ref{lem:cover} implies that either $|G|\leq (2k+1)|A|$ or $|G|\leq (2k+1)|\neg A|$.
\end{proof}

We observe that the converse to Lemma \ref{lem:cover} is false. This is illustrated by the following example of a group $G$ and a set $A\subseteq G$ with the property that three translates of $A$ cover $G$ but $A$ has the $(\log_2 |G|-1)$-order property.

\begin{example}\label{ex:convfalse}
Let $B\subseteq G=\mathbb{F}_2^n$ consist of $\{e_i+e_j: i\leq j\}$, where $\{e_1,\ldots, e_n\}$ are the standard basis vectors of $G$.  Clearly $B$ has the $n$-order property, and thus $A:=\neg B$ has the $(n-1)$-order property.  We claim that $G\subseteq A\cup (A+e_1)\cup (A+e_n)$.  Indeed, if $x\in G\setminus A$, then $x=e_i+e_j$ for some $i> j$. If $2\leq j< i$, we have $e_i+e_j=(e_1+e_i+e_j)+e_1 \in A+e_1$, and if $j=1<i$, we have $e_1+e_i=(e_1+e_i+e_n)+e_n\in A+e_n$. Thus $A$ has the $(n-1)$-order property, even though $G$ is covered by only three translates of $A$.
\end{example}

\section{Regularity and goodness}\label{sec:regularity}

Now that we have elucidated the notion of stability to the extent necessary for the remainder of this paper, we turn our attention to the concept of regularity. As sketched in the introduction, a regularity lemma allows one to split a mathematical object into a bounded number of clusters such that each cluster (or pair of clusters) behaves roughly like a random object. In the case of a graph $\Gamma=(V,E)$ with vertex set $V$ and edge set $E$, Szemer\'edi proved as part of his groundbreaking work on long arithmetic progressions in dense subsets of the integers \cite{Szemeredi:1975le} that given any $\epsilon>0$, one can partition $V$ into a bounded number of classes $V_1,\dots,V_k$ with $k\leq k_0(\epsilon)$ such that for all but an $\epsilon$-fraction of pairs $(i,j)\in[k]^2$ the pair $(V_i,V_j)$ is $\epsilon$-regular in the following sense. 

\begin{definition}[$\epsilon$-regular pair]\label{def:regularpair}
Let $U,W\subseteq V$, and let $\epsilon>0$. Then $(U,W)$ is said to be an \emph{$\epsilon$-regular pair} if for all subsets $U'\subseteq U$, $W'\subseteq W$ with $|U'|\geq \epsilon |U|$, $|W'|\geq \epsilon |W|$ , we have $|d(U',W')-d(U,W)|<\epsilon$.
\end{definition}

It was shown by Gowers that in general $k_0(\epsilon)$ must grow as a tower of height proportional to $\e^{-1}$, and by Malliaris and Shelah that it can be taken to be polynomial in $\e^{-1}$ if the graph is stable. Moreover, they showed that in the case of stable graphs, one can guarantee that all pairs in the partition are $\e$-regular, and furthermore that the density between each pair is either close to 0 or close to 1. We give a discussion of the proof technique at the start of Section \ref{sec:proof}.

We shall not use the statement of the regularity lemma in graphs (or stable graphs for that matter), but Definition \ref{def:regularpair} shall make an appearance later on in this section.  

Indeed, in this paper we are exclusively concerned with the notion of \emph{arithmetic} regularity, first introduced by Green in \cite{Green:2005kh}. 
Before giving a precise definition of what we mean by arithmetic regularity for the purposes of this paper, we need to set up some notation.

As is common in arithmetic combinatorics, given any subset $B\subseteq G$, let its \emph{characteristic measure} $\mu_B$ be defined as $\mu_B(x):=(|G|/|B|)1_B(x)$, where $1_B$ is the indicator function of $B$. Note that the normalisation is chosen so that $\E_{x \in G} \mu_B(x)=1$, where $\E_{x \in G}$ denotes the sum over all elements $x\in G$, normalised by the size $|G|$ of the group $G$. 

Further, given a subset $A\subseteq G$ and an element $y\in G$, write
\[f^y_{B,A}(x):=(1_{(A-y)\cap B}(x)-\alpha_{y+B}) \mu_B(x),\]
where $\alpha_{y+B}:=|(A-y)\cap B|/|B|$ is the density of $A$ on the translate $y+B$. Observe that $\E_{x \in G} f^y_{B,A}(x)=0$, i.e. we can think of $f^y_{B,A}$ as being the balanced indicator function of $A$ relative to $y+B$. 

We shall make some mild use of the Fourier transform on the group $G:=\F_p^n$ beyond the mere statement of the arithmetic regularity lemma. Let $\widehat{G}$ denote the group of characters on $G$, which take the form $\gamma: G\rightarrow \T$, $\gamma(x)=\omega^{x\cdot t}$ for some $t\in \F_p^n$. Here $\omega:=\exp(2\pi i/p)$ is a $p$th roof of unity, and $\cdot$ denotes the usual scalar product. It is not difficult to see that $\widehat{G}$ is in fact isomorphic to $G$ itself.

For each $t\in\widehat{G}=\F_p^n$, let the Fourier transform of $f:\F_p^n \rightarrow \C$ at $t$ be defined by
\[\widehat{f}(t):=\E_{x\in G} f(x)\omega^{x\cdot t}.\]
Orthogonality of the characters leads straightforwardly to the inversion formula
\[f(x)=\sum_{t\in \widehat{G}} \widehat{f}(t)\omega^{-x\cdot t}.\]
It is also easy to check that Parseval's identity holds, that is,
\begin{equation}\label{eq:parseval}
\E_{x\in G} |f(x)|^2=\sum_{t\in \widehat{G}} |\widehat{f}(t)|^2.
\end{equation}
Finally, observe that since $\E_{x \in G} f^y_{H,A}(x)=0$, we have that $\widehat{f_{H,A}^y}(t)=0$ whenever $t\in H^\perp$.

Here then is the Fourier-analytic notion of regularity we shall use.

\begin{definition}[$\epsilon$-uniform with respect to $B$]\label{def:regular}
Let $A, B\subseteq G$, and let $y\in G$.  We say that $y$ is \emph{$\epsilon$-uniform for $A$ with respect to $B$} if $\sup_{t\in \widehat{G}}|\widehat{f^y_{B,A}}(t)|\leq \epsilon$. 
\end{definition}

We shall apply this concept exclusively in the case where $B=H$ is a subspace of $G=\F_p^n$, which means that we only care about the size of the Fourier coefficients of $\widehat{f^y_{H,A}}(t)$ when $t\notin H^\perp$. When $A$ is clear from the context, we shall simply refer to $y$ as \emph{$\epsilon$-uniform with respect to $B$} and omit the subscript $A$. Note that in \cite{Green:2005kh}, an element $y$ satisfying the condition in Definition \ref{def:regular} was said to be an ``$\epsilon$-regular value with respect to $A$". We deliberately rename the concept here so as to avoid any confusion with the notion of a regular Bohr set in forthcoming work.
 
\begin{definition}[totally $\epsilon$-uniform]\label{def:totregular}
Let $A\subseteq G:=\F_p^n$ and $H\leqslant G$ be a subspace. We say that $H$ is \emph{totally $\epsilon$-uniform for $A$} if every $y\in G$ is $\epsilon$-uniform for $A$ with respect to $H$.  In other words, $H$ is totally $\epsilon$-uniform for $A$ if $|\widehat{f^y_{H,A}}(t)|\leq \epsilon$ for all $y\in G$ and $t\notin H^\perp$.
\end{definition}

Observe that $H$ being totally $\epsilon$-uniform for $A$ in the sense of Definition \ref{def:totregular} is stronger than $H$ being ``$\epsilon$-regular for $A$" according to \cite{Green:2005kh}, where a small number of non-uniform values $y\in G$ were permitted.

The goal of this paper is to show that when $A$ is $k$-stable for some $k\geq 2$, then we can find a totally $\epsilon$-uniform subspace $H$ whose parameters depend only on $k$ and $\epsilon$. In fact, we shall establish the existence of a subspace $H$ with an even stronger property, which we characterise as follows.

\begin{definition}[$\epsilon$-good]\label{def:good}
Let $A, B\subseteq G$, and let $y\in G$. We say that $y$ is \emph{$\epsilon$-good for $A$ with respect to $B$} if $|(A-y)\cap B|\leq \epsilon |B|$ or $|B\setminus (A-y)|\leq \epsilon |B|$. We say that $B$ is \emph{$\epsilon$-good for $A$}  if $y$ is $\epsilon$-good for $A$ with respect to $B$ for all $y\in G$.\footnote{For those familiar with \cite{Malliaris:2014go}, this is equivalent to saying that $B$ is an $\epsilon$-good subset in the Cayley graph $\Gamma(G,A)$.}
\end{definition}

Again, we shall use this definition only in the case where $B=H$ is a subspace of $G$, and often drop the reference to $A$ when this causes no ambiguity. Of course, the notions in Definitions \ref{def:totregular} and \ref{def:good} are intimately related. In particular, an $\epsilon$-good subspace will also be totally $\epsilon'$-uniform for some $\epsilon'$ which goes to $0$ with $\epsilon$.

\begin{lemma}[Good implies totally uniform]\label{lem:exctototregtoy}
Let $A\subseteq G:=\F_p^n$ and let $H\leqslant G$ be a subspace. If $H$ is $\epsilon$-good for $A$, then it is totally $\epsilon (p+1)$-uniform for $A$.
\end{lemma}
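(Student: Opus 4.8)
The plan is to fix $y \in G$ and a character $t \notin H^\perp$, and bound $|\widehat{f^y_{H,A}}(t)|$ directly using the $\epsilon$-goodness hypothesis. Write $\alpha := \alpha_{y+H} = |(A-y)\cap H|/|H|$ for the density of $A$ on the coset $y+H$. Since $H$ is $\epsilon$-good for $A$, we have either $\alpha \leq \epsilon$ or $1 - \alpha \leq \epsilon$; that is, $\alpha$ is within $\epsilon$ of $0$ or of $1$. The key observation is that $f^y_{H,A}(x) = (1_{(A-y)\cap H}(x) - \alpha)\mu_H(x)$ is supported on $H$, takes the value $(1-\alpha)(|G|/|H|)$ on the $\alpha|H|$ points of $(A-y)\cap H$ and the value $-\alpha(|G|/|H|)$ on the remaining $(1-\alpha)|H|$ points of $H$, and is zero off $H$.

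First I would estimate the $\ell^1$-type mass of this function. By the triangle inequality,
\[
|\widehat{f^y_{H,A}}(t)| = \left| \E_{x\in G} f^y_{H,A}(x)\,\omega^{x\cdot t} \right| \leq \E_{x\in G} |f^y_{H,A}(x)| = \frac{1}{|G|}\sum_{x\in H} |f^y_{H,A}(x)|,
\]
and the right-hand side equals $\frac{1}{|G|}\left( \alpha|H|\cdot(1-\alpha)\tfrac{|G|}{|H|} + (1-\alpha)|H|\cdot\alpha\tfrac{|G|}{|H|}\right) = 2\alpha(1-\alpha)$. In the case $\alpha \leq \epsilon$ this gives $2\alpha(1-\alpha) \leq 2\epsilon$, and similarly $2\epsilon$ when $1-\alpha\leq\epsilon$; so the crude $\ell^1$ bound already yields total $2\epsilon$-uniformity. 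To squeeze out the claimed constant $\epsilon(p+1)$ rather than $2\epsilon$, I would instead exploit the mean-zero property $\widehat{f^y_{H,A}}(t)$ for $t \notin H^\perp$ more carefully: restricting to $H$, the character $x \mapsto \omega^{x\cdot t}$ is a nontrivial character of $H$ (nontrivial precisely because $t\notin H^\perp$), so $\sum_{x\in H}\omega^{x\cdot t} = 0$. Hence $\sum_{x\in H} f^y_{H,A}(x)\omega^{x\cdot t} = \frac{|G|}{|H|}\sum_{x\in (A-y)\cap H}\omega^{x\cdot t}$, the constant $-\alpha$ part dropping out entirely. Taking the case $\alpha\le\epsilon$ (the small set): this sum has at most $\alpha|H| \le \epsilon|H|$ terms each of modulus $1$, so $|\widehat{f^y_{H,A}}(t)| \le \frac{1}{|G|}\cdot \frac{|G|}{|H|}\cdot \epsilon|H| = \epsilon$. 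In the case $1 - \alpha \le \epsilon$ (the small complement), I apply the same reasoning to $H \setminus (A-y)$: here $\sum_{x\in H} f^y_{H,A}(x)\omega^{x\cdot t} = -\frac{|G|}{|H|}\sum_{x \in H\setminus(A-y)}\omega^{x\cdot t}$ again because the constant part sums to zero, giving modulus at most $\epsilon$ as well.

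This already proves the stronger bound $\epsilon$, so the factor $(p+1)$ in the statement is generous; I suspect the authors keep the weaker constant $\epsilon(p+1)$ because their actual argument passes through a pointwise bound on $|1_{(A-y)\cap H} - \alpha|$ and a term-by-term estimate on $\widehat{f^y_{H,A}}(t) = \widehat{1_{(A-y)\cap H}\mu_H}(t) - \alpha\,\widehat{\mu_H}(t)$, where $\widehat{\mu_H}(t)=0$ for $t\notin H^\perp$, bounding $|\widehat{1_{(A-y)\cap H}\mu_H}(t)|$ by $\frac{|H|}{|(A-y)\cap H|}\cdot\frac{|(A-y)\cap H|}{|G|}\cdot\frac{|G|}{|H|}$-type quantities — but I would present the clean version above. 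The only genuine subtlety — and the single step worth being careful about — is the case split and making sure that in the \emph{large} case ($1-\alpha \le \epsilon$) one measures the complement $H\setminus(A-y)$ rather than $A-y$ itself; everything else is the observation that a nontrivial character sums to zero over $H$, which kills the mean-zero constant and reduces the bound to counting the points of the small side.
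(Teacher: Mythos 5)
Your proof is correct, and it takes a genuinely different (and in fact sharper) route than the paper's. The paper fixes $t\notin H^\perp$, partitions $H$ into the $p$ cosets of $H\cap\langle t\rangle^\perp$ on which $x\cdot t$ is constant, and bounds $\left|\E_j\omega^j(\alpha_j-\alpha_{y+H})\right|$; in the dense case it needs the pointwise estimate $\alpha_j\geq 1-\epsilon p$ on each coset density, and that is exactly where the factor $p+1$ enters. You instead use orthogonality of the nontrivial character over $H$ to annihilate the constant term $-\alpha_{y+H}$, reducing $\widehat{f^y_{H,A}}(t)$ to $\frac{1}{|H|}\sum_{x\in(A-y)\cap H}\omega^{x\cdot t}$, which you then bound by counting points on whichever of $(A-y)\cap H$ or $H\setminus(A-y)$ is small; this gives $|\widehat{f^y_{H,A}}(t)|\leq\min(\alpha_{y+H},1-\alpha_{y+H})\leq\epsilon$, a $p$-independent constant strictly better than the stated $\epsilon(p+1)$. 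You are also right that even your crude $\ell^1$ estimate $2\alpha(1-\alpha)\leq 2\epsilon$ already suffices, since $p+1\geq 3$. The one point worth stating explicitly (which you do handle, via the paper's own remark after Parseval) is that total uniformity only requires controlling $t\notin H^\perp$, because $\widehat{f^y_{H,A}}$ vanishes identically on $H^\perp$; with that noted, your argument is complete and could replace the paper's with an improved constant.
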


\begin{proof}
Fix any $y\in \F_p^n$. We want to show that for any $t\notin H^\perp$,
\[|\widehat{f_H^y}(t)|=\left|\E_x (1_{(A-y)\cap H}(x)-\alpha_{y+H})\mu_H(x)\omega^{t\cdot x}\right|=\left|\E_{x\in H} (1_{(A-y)\cap H}(x)-\alpha_{y+H})\omega^{t\cdot x}\right|\]
is at most $\e (p+1)$, provided that $H$ is $\e$-good for $A$. For fixed $t\notin H^\perp$ define $H':=H\cap\langle t\rangle^\perp$, and partition $H$ into cosets $H_j$ of $H'$, $j=0,1,\dots,p-1$ such that $x\cdot t=j$ for all $x\in H_j$. Let $\alpha_j:=|(A-y)\cap H_j|/|H_j|$ and note that $|H_j|=|H|/p$. It follows that 
\[|\widehat{f_H^y}(t)|=\left|\E_j \omega^j \E_{x\in H_j} (1_{(A-y)\cap H}(x)-\alpha_{y+H})\right|=\left|\E_j \omega^j (\a_j-\a_{y+H})\right|.\]
Because $H$ is $\e$-good for $A$, either $|(A-y)\cap H|\leq \e |H|$ or $|H\setminus(A-y)|\leq \e|H|$. Suppose first that $|(A-y)\cap H|\leq \e |H|$ holds, that is, $\a_{y+H}\leq \e$. Then by the triangle inequality we have
\[|\widehat{f_H^y}(t)|=\left|\E_j \omega^j (\a_j-\a_{y+H})\right|\leq \E_j \a_j+\a_{y+H}=2\a_{y+H}\leq 2\epsilon,\]
so since $p\geq 2$ we are done. Suppose now that $|H\setminus(A-y)|\leq \e|H|$, that is, $\a_{y+H}\geq 1- \e$. Since $\E_j\a_j=\a_{y+H}$ and $\a_j\leq 1$,
\[\a_j=\sum_{\ell=0}^{p-1}\a_\ell-\sum_{\ell\neq j}\a_\ell\geq p\cdot\a_{y+H}-(p-1)\geq p(1-\e)-(p-1)=1-\e p,\]
and thus $1-\a_j\leq \e p$. It follows that 
\[|\widehat{f_H^y}(t)|\leq \E_j \left| (\a_j-\a_{y+H})\right|=\E_j \left| (1-\a_j)-(1-\a_{y+H})\right|\leq \e p+\e=(p+1)\e,\]
which concludes the proof.
\end{proof}

It will be convenient to express our first auxiliary result in the language of graphs. The reader should bear in mind that we will be applying it to the Cayley-type graph $\Gamma_A=\Gamma(G,A)$, and that it could therefore be rephrased without any reference to the graph setting.  We begin with a lemma which says that regular pairs in stable graphs must have density close to $0$ or $1$.  This is a direct corollary of the induced embedding lemma (a special case of Theorem 14 in \cite{Komlos:2002gf}), and indeed the bounds stated in Lemma \ref{lem:trivregpairs} below are those that arise from this approach. For the benefit of the reader unfamiliar with the embedding lemma we have included a direct proof here, which yields ever so slightly weaker bounds. Given a graph $\Gamma=(V,E)$ and $x\in V$, let $N(x):=\{y\in V: xy\in E\}$.

\begin{lemma}[Regular pairs in stable graphs have density close to 0 or 1]\label{lem:trivregpairs}
For all integers $k\geq 2$ and all $0<\epsilon<\epboundtrivregpairs$, there exists $m=m(k,\epsilon)$ such that the following holds.  Suppose that $\Gamma=(V,E)$ is a $k$-stable graph and that $X, Y\subseteq V$ are subsets such that $|X|=|Y|\geq m$ and $(X,Y)$ is an $\epsilon$-regular pair.  Then either $d(X,Y)\leq \eptrivregpairs{\epsilon}{k}$ or $d(X,Y)\geq 1-\eptrivregpairs{\epsilon}{k}$.
\end{lemma}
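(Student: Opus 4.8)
The plan is to argue by contraposition: assuming that $(X,Y)$ is $\epsilon$-regular with density $\delta$ satisfying $\eptrivregpairs{\epsilon}{k}<\delta<1-\eptrivregpairs{\epsilon}{k}$, I would build a half-graph of height $k$ inside $\Gamma$, contradicting $k$-stability. The mechanism for this is a greedy/iterative construction that alternately picks vertices $a_i\in X$ and $b_i\in Y$ together with shrinking ``candidate'' sets, using $\epsilon$-regularity at each step to control how much the candidate sets shrink. Concretely, I would maintain at stage $i$ a pair of sets $X_i\subseteq X$, $Y_i\subseteq Y$, both still of size at least (say) $\epsilon|X|$, with the property that the vertices $a_1,\dots,a_{i}$ already chosen in $X$ are adjacent to every vertex of $Y_i$ and non-adjacent to the $b_j$'s chosen so far, etc.

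The key steps, in order, are as follows. First, fix $\delta:=d(X,Y)$ and suppose $\eptrivregpairs{\epsilon}{k}<\delta<1-\eptrivregpairs{\epsilon}{k}$. Second, observe that since $(X,Y)$ is $\epsilon$-regular, for every $X'\subseteq X$ with $|X'|\geq\epsilon|X|$ and $Y'\subseteq Y$ with $|Y'|\geq\epsilon|Y|$ we have $d(X',Y')>\delta-\epsilon>0$ and $1-d(X',Y')>1-\delta-\epsilon>0$; hence inside any such pair there is at least one vertex of $Y'$ with ``many'' (a positive proportion, after removing exceptional vertices) neighbours in $X'$ and at least one vertex with many non-neighbours. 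Third, run the iteration: at step $i$, pick $b_i\in Y_{i-1}$ so that its neighbourhood $N(b_i)$ meets $X_{i-1}$ in at least (roughly) $(\delta-2\epsilon)|X_{i-1}|$ vertices — such $b_i$ exists because $d(X_{i-1},Y_{i-1})>\delta-\epsilon$ forces most vertices of $Y_{i-1}$ to have degree into $X_{i-1}$ close to the average — and set $X_i:=X_{i-1}\cap N(b_i)$; symmetrically pick $a_i\in X_i$ with many non-neighbours in $Y_{i-1}$ and set $Y_i:=Y_{i-1}\setminus N(a_i)$, of size at least $(1-\delta-2\epsilon)|Y_{i-1}|$. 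Then $a_i b_j\in E$ for $j<i$ (since $a_i\in X_i\subseteq N(b_j)$) and $a_i b_j\notin E$ for $j\ge i$ would need to be arranged — here I need to be careful about which direction the half-graph runs, and it is cleanest to choose $a_i$ first (adjacent to all earlier $b_j$, $j<i$) then $b_i$ (non-adjacent to $a_i$ and all later-to-be-chosen $a$'s by keeping it outside the relevant neighbourhoods), mirroring exactly the construction in Lemma~\ref{lem:cover}. Fourth, count: after $k$ steps the sets have shrunk by a factor of at most $\min(\delta-2\epsilon,1-\delta-2\epsilon)^{k}\ge (\eptrivregpairs{\epsilon}{k}-2\epsilon)^{k}$, which, for $\epsilon$ small enough relative to the stated bound $\epboundtrivregpairs$ and $m=m(k,\epsilon)$ chosen so that $|X|=|Y|\ge m$ keeps all candidate sets of size $\ge\epsilon|X|$ throughout, is still positive; so the construction goes through for all $k$ steps, producing $a_1,\dots,a_k,b_1,\dots,b_k$ witnessing the $k$-order property. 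This contradicts $k$-stability, so $\delta\le\eptrivregpairs{\epsilon}{k}$ or $\delta\ge 1-\eptrivregpairs{\epsilon}{k}$.

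The main obstacle — and the place requiring genuine care rather than routine bookkeeping — is the bookkeeping of the exceptional-vertex sets and the precise shrinkage rate: I need the candidate sets $X_i,Y_i$ to remain simultaneously above the $\epsilon|X|$ threshold for all $i\le k$ so that $\epsilon$-regularity remains applicable at every step, and at the same time I need the "many neighbours / many non-neighbours" guarantees to hold for at least one surviving vertex, not merely on average. This forces a delicate choice: one loses a factor of roughly $\epsilon$ to discard vertices of atypical degree (those not respecting the regularity estimate) and a factor of roughly $(\delta-2\epsilon)$ or $(1-\delta-2\epsilon)$ to pass to a neighbourhood, per step, so the total budget is of order $\big(\epsilon\cdot\min(\delta,1-\delta)\big)^{k}$, and one must verify this is consistent with $\delta$ of size $\eptrivregpairs{\epsilon}{k}=4\epsilon^{1/k}$ and the hypothesis $\epsilon<\epboundtrivregpairs=(1/k)^3$. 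I expect that setting $m(k,\epsilon):=\lceil \epsilon^{-Ck}\rceil$ for a suitable absolute constant $C$ (or the slightly better value coming from the embedding lemma, as the statement notes) makes everything fit; the arithmetic to confirm the exponent $1/k$ in $\eptrivregpairs{\epsilon}{k}$ is exactly what one gets from iterating the bound $k$ times and is the only computation I would actually carry out in detail.
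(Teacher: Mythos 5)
Your plan is correct and is essentially the paper's own argument: assuming the density lies strictly between $\eptrivregpairs{\epsilon}{k}$ and $1-\eptrivregpairs{\epsilon}{k}$, one greedily alternates choosing a vertex with many neighbours (resp.\ non-neighbours) in the current candidate set and shrinking that set accordingly, using $\epsilon$-regularity at each stage and checking that after $k$ steps the candidate sets still have size at least $\epsilon|X|$ (which is exactly where the exponent $1/k$ and the choice of $m$ come from), thereby exhibiting the $k$-order property. The only cosmetic difference is that the paper runs the iteration with a relabelled stability parameter ($k=2t+2$), losing slightly in the constants, whereas your accounting, with the $-2\epsilon$ and $-1$ losses absorbed into $m$ being large, targets the stated bound $\eptrivregpairs{\epsilon}{k}$ directly.
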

\begin{proof} We shall prove the immaterially weaker statement that for every integer $t\geq 1$ and $0<\epsilon<(1/2)^{2t+2}$, there exists $m=m(t,\epsilon)$ such that the following holds.  Suppose that $\Gamma=(V,E)$ is a $t$-stable graph and that $X, Y\subseteq V$ are subsets such that $|X|=|Y|\geq m$ and $(X,Y)$ is an $\epsilon$-regular pair.  Then either $d(X,Y)\leq \epsilon^{1/(2t+2)}$ or $d(X,Y)\geq 1-\epsilon^{1/(2t+2)}$.

Let $k:=2t+2$, and suppose that $m$ is sufficiently large compared to $k$ and $1/\epsilon$ (to be determined later). Note that since $\epsilon<(1/2)^k$, $\epsilon^{1/k}-\epsilon>\epsilon^{2/k}$. 

Suppose towards a contradiction that $X, Y\subseteq V$ are subsets such that $|X|=|Y|\geq m$, the pair $(X,Y)$ is $\epsilon$-regular, and $\epsilon^{1/k} < d(X,Y)< 1-\epsilon^{1/k}$.  We build by induction a sequence $x_1,\ldots, x_t,y_1,\ldots, y_t$ such that $x_iy_j\in E$ if and only if $i\leq j$, contradicting the assumption that $\Gamma$ is $t$-stable.

To start, observe that since $d(X,Y)>\epsilon^{1/k}$, there is $x_1\in X$ such that $|N(x)\cap Y|\geq \epsilon^{1/k} |Y|$.  Let $Y_1:=N(x)\cap Y$ and let $Z:=X\setminus \{x_1\}$.  Because $m$ is large, because $\epsilon<\epsilon^{1/k}$, and by definition of $Z$ and $Y_1$, we have that $|Y_1|\geq \epsilon|Y|$ and $|Z|=|X|-1\geq \epsilon |X|$.  Thus, since $(X,Y)$ is $\epsilon$-regular, 
$$
d(Z,Y_1)\leq d(X,Y)+\epsilon < 1-\epsilon^{1/k}+\epsilon.
$$
It follows that there is $y_1\in Y_1$ such that $|Z\setminus N(y_1)|\geq (\epsilon^{1/k}-\epsilon)|Z|$.  Let $X_1:=Z\setminus N(y_1)$.  Note that $|Y_1|\geq \epsilon^{1/k} |Y|$ and $|X_1|\geq (\epsilon^{1/k}-\epsilon)(|X|-1)\geq \epsilon^{2/k}|X|$, since $|X|\geq m$ is sufficiently large. 

Now assume that $1\leq i<t$ and suppose that we have inductively constructed $x_1,\ldots, x_i$, $y_1,\ldots, y_{i}$ and sets $X_i\subseteq X$, $Y_i\subseteq Y$ such that the following hold.
\begin{enumerate}[(i)]
\item  $|X_i|\geq \epsilon^{2i/k}|X|$ and $|Y_i|\geq \epsilon^{(2i-1)/k}|Y|$;
\item $x_i\in X_i$, $y_i\in Y_i$;
\item  for each $1\leq j, s\leq i$, $x_jy_s\in E$ if and only if $j\leq s$;
\item $Y_i\subseteq N(x_1)\cap \ldots \cap N(x_i)\cap Y$, $X_i\subseteq X\setminus (N(y_1)\cup \ldots \cup N(y_{i}))$.
\end{enumerate}
By (i) and since $(X,Y)$ is $\epsilon$-regular, we have that 
$$
d(X_i,Y_i)\geq d(X,Y)-\epsilon > \epsilon^{1/k} -\epsilon.
$$
Thus there is $x_{i+1}\in X_i$ such that 
$$
|N(x_{i+1})\cap Y_i|\geq (\epsilon^{1/k}-\epsilon)|Y_i|\geq (\epsilon^{1/k}-\epsilon)\epsilon^{(2i-1)/k}|Y|>(\epsilon^{2/k})\epsilon^{(2i-1)/k}|Y|=\epsilon^{(2(i+1)-1)/k}|Y|\geq \epsilon |Y|.
$$
Let $Y_{i+1}:=N(x_{i+1})\cap Y_i$ and let $Z:=X_i\setminus \{x_{i+1}\}$.  Note that as before, $|Z|=|X_{i}|-1\geq \epsilon^{2i/k}|X|-1\geq \epsilon |X|$ because $m$ is large.
Thus, $|Z|\geq \epsilon |X|$ and $|Y_{i+1}|\geq \epsilon |Y|$, so by $\epsilon$-regularity of $(X,Y)$, we have
$$
d(Z,Y_{i+1})\leq d(X,Y)+\epsilon < 1-\epsilon^{1/k}+\epsilon.
$$
Thus there is $y_{i+1}\in Y_{i+1}$ such that $|Z\setminus N(y_{i+1})|\geq (\epsilon^{1/k}-\epsilon)|Z|$.  Set $X_{i+1}:=Z\setminus N(y_{i+1})$.  By definition of $X_{i+1}$ and our induction hypothesis we have
\begin{align*}
|X_{i+1}|\geq (\epsilon^{1/k}-\epsilon)|Z|\geq (\epsilon^{1/k}-\epsilon)(|X_i|-1)\geq (\epsilon^{1/k}-\epsilon)(\epsilon^{2i/k}|X|-1)&\geq \epsilon^{2/k}(\epsilon^{2i/k}|X|)\geq \epsilon m,
\end{align*}
where the last two inequalities are because $m$ is large, and $2(i+1)/k\leq 1$. This finishes the inductive step of our construction. After $t$ steps we have constructed $x_1,\ldots, x_t,y_1,\ldots, y_t$ such that $x_iy_j\in E$ if and only if $i\leq j$, a contradiction.

Examining the largeness assumptions placed on $m$ in more detail, we find that it suffices to have 
\begin{enumerate} [(a)]
\item $m>k/2-2$;
\item $m$ sufficiently large such that for all $N\geq \epsilon m$, we have $(\epsilon^{1/k}-\epsilon)N-1\geq \epsilon^{2/k}N$;
\item $m$ sufficiently large such that for all $N\geq \epsilon^{2(t-1)/k}m$, we have $N-1\geq \epsilon m$.
\end{enumerate}
The lemma then follows as stated at the start of the proof.
\end{proof}

We now relate the notion of $\e$-uniformity for $A$ to the existence of certain regular pairs in $\Gamma_A$. The following lemma to this effect is straightforward, and essentially contained in Section 9 of \cite{Green:2005kh}. We include it here for completeness and because our definitions differ slightly from those in that paper.

\begin{lemma}[Subspaces induce regular pairs]\label{lem:indregtoy}
Let $\epsilon>0$ and $A\subseteq G:= \mathbb{F}_p^n$. Suppose that $H\leq G$ is a subspace, and that $y\in G$ is $\epsilon$-uniform for $A$ with respect to $H$.  Then $(H,H+y)$ is a $\epindregtoy$-regular pair in $\Gamma_A$, and $d(H,H+y)=|(A-y)\cap H|/|H|$.
\end{lemma}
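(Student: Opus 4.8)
The plan is to transfer everything onto the single subspace $H$ by pulling the class $H+y$ back along the translation $w\mapsto w-y$, and then to recognise the quantities involved as character sums against the balanced function $f:=f^y_{H,A}$, so that $\epsilon$-uniformity of $y$ can be fed in directly. I would begin with the density formula, which is the case $U'=H$, $W'=H+y$ of the computation below: a pair $\{u,\,w+y\}$ with $u,w\in H$ is an edge of $\Gamma_A$ precisely when $u+w\in A-y$, and as $u$ ranges over $H$ the element $u+w$ ranges uniformly over $H$; hence $d(H,H+y)=\E_{u,w\in H}1_{A-y}(u+w)=\E_{z\in H}1_{A-y}(z)=|(A-y)\cap H|/|H|=\alpha_{y+H}$.

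For the regularity claim, fix $U'\subseteq H$ and $W'\subseteq H+y$ with $|U'|,|W'|\ge\epsilon^{1/2}|H|$ and put $W'':=W'-y\subseteq H$. From the definitions of $f$ and $\mu_H$ one has $1_{(A-y)\cap H}(x)=\alpha_{y+H}+(|H|/|G|)f(x)$ for $x\in H$, and since $u+w\in H$ this gives
\[ d(U',W')-\alpha_{y+H}\;=\;\frac{|H|}{|G|\,|U'|\,|W''|}\sum_{u\in U'}\sum_{w\in W''}f(u+w). \]
Thus it suffices to prove $\left|\sum_{u\in U'}\sum_{w\in W''}f(u+w)\right|\le\epsilon\,|G|\sqrt{|U'||W''|}$, for then the difference above is at most $\epsilon|H|/\sqrt{|U'||W''|}\le\epsilon|H|/(\epsilon^{1/2}|H|)=\epsilon^{1/2}$.

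To estimate the double sum I would substitute the inversion formula $f(z)=\sum_t\widehat f(t)\omega^{-z\cdot t}$, factor the resulting geometric sums as $c_S(t):=\sum_{x\in S}\omega^{-x\cdot t}$, obtaining $\sum_{u\in U'}\sum_{w\in W''}f(u+w)=\sum_{t\in\widehat G}\widehat f(t)\,c_{U'}(t)\,c_{W''}(t)$, and then bound each $|\widehat f(t)|$ by $\epsilon$, apply Cauchy--Schwarz in $t$, and finish with Parseval in the shape $\sum_t|c_S(t)|^2=|G||S|$ (which is \eqref{eq:parseval} applied to $1_S$). This yields exactly $\epsilon|G|\sqrt{|U'||W''|}$. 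Note that only the coefficients $\widehat f(t)$ with $t\notin H^\perp$ are used, since $\widehat f$ vanishes on $H^\perp$, so the hypothesis in the form of Definition \ref{def:totregular} would suffice just as well.

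This is essentially the easy direction relating Fourier uniformity to combinatorial regularity, sketched in Section~9 of \cite{Green:2005kh}, so there is no real obstacle; the only points needing care are the normalisation constants $|G|/|H|$ and $\mu_H$ when moving between sums and expectations, and upgrading $|d(U',W')-\alpha_{y+H}|\le\epsilon^{1/2}$ to the strict inequality of Definition \ref{def:regularpair}. For the latter I would observe that if $U'=H$ (or $W''=H$) then $c_{U'}$ is supported on $H^\perp$, where $\widehat f=0$, so the double sum vanishes and the difference is $0<\epsilon^{1/2}$; while if $U'\subsetneq H$ and $W''\subsetneq H$ then $\sum_{t\notin H^\perp}|c_{U'}(t)|^2=|G||U'|(1-|U'|/|H|)<|G||U'|$, which makes the Cauchy--Schwarz step, and hence the final bound, strict.
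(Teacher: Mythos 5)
Your argument is correct and is essentially the paper's own proof: both express $d(U',W')-\alpha_{y+H}$ as a bilinear sum against $f^y_{H,A}$, pass to Fourier coefficients, and finish with the sup bound, Cauchy--Schwarz and Parseval (your $c_S(t)$ is just $|G|\widehat{1_S}(-t)$, so the normalisations agree). The only addition is your care over the strict inequality in Definition \ref{def:regularpair}, a detail the paper glosses over; that observation is sound but not a different method.
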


\begin{proof} The latter statement follows straight from the definitions. To see that $(H,H+y)$ is a $\e^{1/2}$-regular pair in $\Gamma_A$, let $U,V\subseteq H$ be two subsets of $H$ of density at least $\e^{1/2}$, and consider the edge density $d(U, V+y)$ between $U$ and $V+y$, which may be estimated using the Fourier transform. Indeed, we write 
\begin{equation}\label{eq:regpair}
d(U,V+y)-\a_{y+H}=\frac{|H||G|}{|U||V|}\E_{u,v} f_H^y(u+v)1_U(u)1_V(v),
\end{equation}
where the expectation in $u$ and $v$ is in absolute value equal to
\[|\sum_{s \notin H^\perp} \widehat{f_H^y}(s)\widehat{1_U}(s)\widehat{1_V}(s)|\leq \sup_{s\notin H^\perp} |\widehat{f_H^y}(s)|\left(\sum_s|\widehat{1_U}(s)|^2\right)^{1/2}\left(\sum_s|\widehat{1_V}(s)|^2\right)^{1/2}.\]
Since $y$ is $\e$-uniform for $A$ with respect to $H$, this expression is, by Parseval's identity (\ref{eq:parseval}), bounded above by
\[\e|U|^{1/2}|V|^{1/2}/|G|.\]
It follows that
\[\left|d(U,V+y)-\a_{y+H}\right|\leq \e\frac{|H||G|}{|U||V|}\frac{|U|^{1/2}|V|^{1/2}}{|G|}=\frac{\e |H|}{|U|^{1/2}|V|^{1/2}}\leq \e^{1/2},\]
where the latter inequality follows from the fact that both subsets $U$ and $V$ have density at least $\e^{1/2}$ in $H$.
\end{proof}

We now prove the main result of this section, which can be viewed as a partial converse to Lemma \ref{lem:exctototregtoy} under the condition that $A$ is stable. 

\begin{proposition}[Uniform implies good for stable sets]\label{lem:goodlemtoy}
Let $k\geq 2$, $0<\epsilon<\boundgoodlemtoy{k}$, $M\geq 0$, and let $p$ be a prime.  Then there is $N=N(\epsilon, k,M,p)$ such that for all $n\geq N$ the following holds. Suppose that $A\subseteq \F_p^n$ is $k$-stable and that $H\leq \F_p^n$ is a subspace of codimension at most $M$. If $y\in \F_p^n$ is $\epsilon$-uniform for $A$ with respect to $H$, then $y$ is $\epgoodlemtoy{\epsilon}{k}$-good for $A$ with respect to $H$.
\end{proposition}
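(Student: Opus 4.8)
The plan is to obtain Proposition~\ref{lem:goodlemtoy} by chaining together the two results that immediately precede it. Lemma~\ref{lem:indregtoy} turns the Fourier-analytic hypothesis (that $y$ is $\epsilon$-uniform) into the combinatorial statement that $(H, H+y)$ is an $\epsilon^{1/2}$-regular pair in the Cayley graph $\Gamma_A$, and records that its density equals $\alpha_{y+H} = |(A-y)\cap H|/|H|$. Lemma~\ref{lem:trivregpairs} says that in a $k$-stable graph an $\epsilon'$-regular pair of equal-sized, sufficiently large vertex sets must have density at most $4(\epsilon')^{1/k}$ or at least $1 - 4(\epsilon')^{1/k}$. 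Since $A$ is $k$-stable if and only if $\Gamma_A$ is $k$-stable (Definition~\ref{def:stableset}), the second lemma applies to $\Gamma_A$; feeding in $\epsilon' = \epsilon^{1/2}$ produces exactly the density bound $\epgoodlemtoy{\epsilon}{k} = 4\epsilon^{1/2k}$.

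In more detail: first I would apply Lemma~\ref{lem:indregtoy} to conclude that $(H, H+y)$ is $\epsilon^{1/2}$-regular in $\Gamma_A$ with $d(H,H+y) = |(A-y)\cap H|/|H|$. The hypothesis $\epsilon < \boundgoodlemtoy{k} = (1/k)^6$ is chosen precisely so that the regularity parameter $\epsilon^{1/2}$ satisfies $\epsilon^{1/2} < (1/k)^3 = \epboundtrivregpairs$, the admissible range for Lemma~\ref{lem:trivregpairs}. Let $m = m(k, \epsilon^{1/2})$ be the threshold from that lemma and set $N := M + \lceil \log_p m \rceil$. Then for every $n \geq N$ and every subspace $H$ of codimension at most $M$ we have $|H| = p^{n - \codim(H)} \geq p^{n - M} \geq m$, while $|H+y| = |H|$ automatically, so the size requirement of Lemma~\ref{lem:trivregpairs} is met with $X = H$ and $Y = H+y$.

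Applying Lemma~\ref{lem:trivregpairs} with regularity parameter $\epsilon^{1/2}$ to the $k$-stable graph $\Gamma_A$ then gives either $d(H, H+y) \leq 4\epsilon^{1/2k}$ or $d(H, H+y) \geq 1 - 4\epsilon^{1/2k}$. Substituting $d(H, H+y) = |(A-y)\cap H|/|H|$ and recalling $4\epsilon^{1/2k} = \epgoodlemtoy{\epsilon}{k}$, the first alternative reads $|(A-y)\cap H| \leq \epgoodlemtoy{\epsilon}{k}\,|H|$ and the second reads $|H \setminus (A-y)| \leq \epgoodlemtoy{\epsilon}{k}\,|H|$. By Definition~\ref{def:good} this is precisely the statement that $y$ is $\epgoodlemtoy{\epsilon}{k}$-good for $A$ with respect to $H$, as desired.

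I do not expect a genuine obstacle here: the result is a clean composition of Lemmas~\ref{lem:indregtoy} and \ref{lem:trivregpairs}. The only points requiring care are bookkeeping --- making sure the $\epsilon$-range survives the square root introduced by Lemma~\ref{lem:indregtoy} (this is exactly why the exponent in $\boundgoodlemtoy{k}$ is $6$, as opposed to the $3$ in $\epboundtrivregpairs$), and extracting $N$ from the requirement that $|H|$ exceed the threshold $m(k,\epsilon^{1/2})$, which is where the dependence of $N$ on $M$ and $p$ enters. It is also worth noting that $|H| = |H+y|$ holds trivially for cosets, so the equal-size hypothesis of Lemma~\ref{lem:trivregpairs} comes for free.
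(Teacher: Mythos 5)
Your proof is correct and is essentially identical to the paper's: both choose $N$ so that $p^{n-M}\geq m(k,\epsilon^{1/2})$, apply Lemma~\ref{lem:indregtoy} to get that $(H,H+y)$ is an $\epsilon^{1/2}$-regular pair with density $|(A-y)\cap H|/|H|$, and then invoke Lemma~\ref{lem:trivregpairs} for the $k$-stable graph $\Gamma_A$ to force that density into $[0,4\epsilon^{1/2k}]\cup[1-4\epsilon^{1/2k},1]$. Your bookkeeping of the $\epsilon$-range (the exponent $6$ versus $3$) matches the paper's intent exactly.
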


\begin{proof}
Choose $N$ sufficiently large so that $n\geq N$ implies $p^{n-M}\geq m(k,\epindregtoy)$, where $m(k,\epindregtoy)$ is as in Lemma \ref{lem:trivregpairs}. Suppose now that $n\geq N$, $A\subseteq \F_p^n$ is $k$-stable, $H\leq \F_p^n$ has codimension at most $M$, and $y$ is $\e$-uniform for $A$ with respect to $H$. By Lemma \ref{lem:indregtoy}, $(H,H+y)$ is $\epindregtoy$-regular and $d(H,H+y)=|(A-y)\cap H|/|H|$.  Because $A$ is $k$-stable, $(H,H+y)$ is $\epindregtoy$-regular, and $|H|\geq m(k,\epindregtoy)$, Lemma \ref{lem:trivregpairs} implies that either $d(H,H+y)\leq \epgoodlemtoy{\epsilon}{k}$ or $d(H,H+y)\geq 1-\epgoodlemtoy{\epsilon}{k}$.  Thus $y$ is $\epgoodlemtoy{\epsilon}{k}$-good for $H$.
\end{proof}

\section{Building a tree in the absence of efficient regularity}\label{sec:proof}

In this section we prove our main result, Theorem \ref{thm:mainthmff} below, which states that if $A$ is $k$-stable, then there is an $\epsilon$-good subspace of codimension polynomial in $1/\epsilon$.  The proof proceeds by contradiction: we shall assume that there is no such $\epsilon$-good subspace, and use this assumption to build a model-theoretic configuration called a tree (see Definition \ref{def:treebound}), which in turn implies a large instance of the order-property.  This general strategy is based on the proof of the stable regularity lemma for graphs in \cite{Malliaris:2014go}. 

There are two key differences between our argument and that in \cite{Malliaris:2014go}.  First, we require an additional ingredient in the form of  Proposition \ref{prop:keylemtoy} below, which shows that a dense set which is $k$-stable has to contain most of a translate of a large subspace. This step is unnecessary in the case of graphs because there are no preferred substructures. Second, to prove Theorem 5.18 in  \cite{Malliaris:2014go} a version of the tree argument we use to prove Theorem \ref{thm:mainff} must be iterated several times, and each iteration generates a new part of the desired partition. We are able to conclude the proof having applied this tree argument only once.  This is because after one iteration we obtain the desired subgroup, the cosets of which automatically generate the other parts in the partition. These differences between the two stable regularity lemmas are rooted in the fact that they are not directly comparable. We refer the reader to the end of Section \ref{sec:remarks} for a more in-depth discussion of their relationship.

Here then is the statement of the aforementioned additional arithmetic ingredient.

\begin{proposition}[Stable sets are dense on subspaces]\label{prop:keylemtoy}
Let $k\geq 2$, $0<\epsilon<\boundgoodlemtoy{k}$, $M\geq 0$, and let $p$ be a prime.  Then there is $\nu=\nu(k,\epsilon, M,p)$ such that for all $n\geq \nu$ the following holds.  Suppose that $H\leq G:= \mathbb{F}_p^n$ is a subspace of codimension at most $M$, and that $A\subseteq G$ is $k$-stable with $|A\cap H|>\epgoodlemtoy{\epsilon}{k}|H|$.  Then there is a subspace $H'\leq H$ and $x\in G$ such that $H'$ has codimension at most $\lfloor2/\epsilon \rfloor$ in $H$ and $|A\cap (H'+x)|\geq (1-\epgoodlemtoy{\epsilon}{k})|H'|$.
\end{proposition}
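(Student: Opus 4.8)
The plan is to exploit the fact that a dense stable set, when restricted to a subspace, must be ``dense in a structured way.'' Concretely, suppose $A \subseteq G := \mathbb{F}_p^n$ is $k$-stable with $|A \cap H| > \epgoodlemtoy{\epsilon}{k}|H|$. I would build, one codimension at a time, a decreasing chain of subspaces of $H$ together with shifts, tracking the density of $A$ on the relevant coset. The key observation is the following dichotomy: given a subspace $K \leq H$ and a translate $x$ with $|A \cap (K+x)| \geq \delta|K|$ for some $\delta \leq \epgoodlemtoy{\epsilon}{k}$ not yet close to $1$, either $K$ (shifted by $x$) is already $\epsilon$-good-ish in the required sense, or we can pass to an index-$p$ subspace $K' \leq K$ on which $A$ has density at least $\delta + (\text{gain})$ on some coset $K' + x'$. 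The point is that if $|A \cap (K+x)| = \delta|K|$ with $\delta$ bounded away from both $0$ and $1$, then averaging over the $p$ cosets of any proper subspace $K'$ of index $p$ in $K$ shows the densities of $A$ on those cosets average to $\delta$; if they were \emph{all} within $\epsilon$ of each other (hence all near $\delta$) across \emph{every} such $K'$, that would force a kind of Fourier-uniformity of $1_A$ restricted to $K$, which combined with $k$-stability and Proposition~\ref{lem:goodlemtoy} (uniform implies good) would push $\delta$ itself to within $\epgoodlemtoy{\epsilon}{k}$ of $0$ or $1$ — contradicting that $\delta$ is in the ``middle range.'' So some coset of some index-$p$ subspace carries density at least $\delta + \epsilon$ (roughly).

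Carrying this out: start with $K_0 := H$, $x_0 := 0$ (or rather the shift achieving the hypothesised density), $\delta_0 := |A \cap H|/|H| > \epgoodlemtoy{\epsilon}{k}$. Wait — I need $\delta_0$ in the middle range, but the hypothesis only gives a lower bound; if $\delta_0 \geq 1 - \epgoodlemtoy{\epsilon}{k}$ we are already done with $H' = H$. So assume $\epgoodlemtoy{\epsilon}{k} < \delta_0 < 1 - \epgoodlemtoy{\epsilon}{k}$. Apply the dichotomy repeatedly: each step either terminates (producing the desired $H'$ of codimension $\leq j \leq \lfloor 2/\epsilon\rfloor$ in $H$ with density $\geq 1 - \epgoodlemtoy{\epsilon}{k}$ on a coset) or drops codimension by one while increasing the density on the tracked coset by at least roughly $\epsilon/2$. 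Since the density lives in $[0,1]$ and starts above $\epgoodlemtoy{\epsilon}{k} > 0$, after at most $\lfloor 2/\epsilon\rfloor$ steps the density must exceed $1 - \epgoodlemtoy{\epsilon}{k}$, at which point we stop. Throughout, I need $n$ large enough (this is where $\nu = \nu(k,\epsilon,M,p)$ comes in) so that all subspaces encountered have codimension at most $M + \lfloor 2/\epsilon\rfloor$ in $G$ and are still large enough to invoke Proposition~\ref{lem:goodlemtoy}, whose $N(\epsilon,k,M',p)$ I apply with $M' := M + \lfloor 2/\epsilon\rfloor$.

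The main obstacle is making the dichotomy quantitatively airtight — specifically, converting ``the density of $A$ on $K+x$ is in the middle range, and we cannot find a proper index-$p$ subspace coset with noticeably larger density'' into a genuine Fourier-uniformity statement for $1_{(A-x)\cap K}$ on $K$. The cleanest route is: if for \emph{every} nonzero $t \in \widehat{K} \setminus \{0\}$ the subspace $K_t := K \cap \langle t\rangle^\perp$ has the property that $A$ has density within $\epsilon$ of $\delta$ on each of its $p$ cosets inside $K+x$, then by the computation in the proof of Lemma~\ref{lem:exctototregtoy} (relating coset densities of an index-$p$ subspace to a single Fourier coefficient) we get $|\widehat{f^x_{K,A}}(t)| \leq (p+1)\epsilon$ for all such $t$, i.e. $x$ is $(p+1)\epsilon$-uniform for $A$ with respect to $K$; then Proposition~\ref{lem:goodlemtoy} (applied with the appropriate codimension bound and with $(p+1)\epsilon$ in place of $\epsilon$, assuming this is still below $\boundgoodlemtoy{k}$) forces $\delta = \alpha_{x+K}$ to be $\leq \epgoodlemtoy{(p+1)\epsilon}{k}$ or $\geq 1 - \epgoodlemtoy{(p+1)\epsilon}{k}$, contradicting the middle-range assumption provided $\epsilon$ was chosen small enough (using $\epsilon < \boundgoodlemtoy{k}$ and, if needed, shrinking the admissible range slightly). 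Thus such a $t$ must exist, and splitting $K$ along $K_t$ gives the desired density increase. One should double-check the bookkeeping on how the density-increase constant interacts with $\lfloor 2/\epsilon\rfloor$ — a gain of at least $c\epsilon$ per step with $c$ a constant like $1/2$ or $1/p$ is what is needed, and whether $c$ depends on $p$ affects whether the codimension bound is exactly $\lfloor 2/\epsilon\rfloor$ or needs a $p$-dependent constant; the statement claims $\lfloor 2/\epsilon\rfloor$, so the intended gain is presumably $\geq \epsilon/2$, which one gets by noting that if the max coset density over all index-$p$ subspaces is $< \delta + \epsilon/2$ then in particular it is $< \delta + \epsilon$, triggering the uniformity argument above.
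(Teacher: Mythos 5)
Your argument is essentially the paper's: an $L^\infty$ density-increment iteration (Fourier non-uniformity yields a coset of a codimension-one subspace with density gain $\epsilon/2$, hence at most $\lfloor 2/\epsilon\rfloor$ steps) terminating in an $\epsilon$-uniform pair $(x,H')$, to which Proposition~\ref{lem:goodlemtoy} is then applied to rule out the low-density alternative. The only differences are cosmetic --- the paper cites the increment step from Green--Sanders and invokes uniform-implies-good once at the very end rather than inside a per-step dichotomy --- and your $(p+1)\epsilon$ detour is unnecessary, since when every coset density exceeds the mean by less than $\epsilon/2$ the triangle inequality already gives $|\widehat{f^{x}_{K,A}}(t)|\leq\epsilon$, so the stated constant $4\epsilon^{1/2k}$ comes out exactly.
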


\begin{proof}
Choose $\nu=N(\epsilon, k, M+\lfloor 2/\epsilon\rfloor,p)$, where $N(\epsilon, k, M+\lfloor 2/\epsilon\rfloor,p)$ is as in Proposition \ref{lem:goodlemtoy}. Assume that $n\geq \nu$, $A\subseteq G=\F_p^n$ is $k$-stable, $H\leq G$ has codimension at most $M$, and $\alpha:=|A\cap H|/|H|>\epgoodlemtoy{\epsilon}{k}$. We first show there is a subspace $H'\leq H$ of codimension at most $\lfloor 2/\epsilon\rfloor$ in $H$ together with an element $x\in G$ such that $x$ is $\epsilon$-uniform for $A$ with respect to $H'$ and $|(A-x)\cap H'|\geq \alpha |H'|$.  

Fix any $y_0\in G$, say $y_0=0$.  By a standard Fourier argument (see for example Theorem 1.1 in \cite{Green:2015wp}), if $y_0\in G$ is not $\e$-uniform for $A$ with respect to $H$, then there exist $x_0\in G$ and a subspace $H_0\leqslant H$ of codimension 1 in $H$ such that
\[\frac{|(A-y_0)\cap(H_0+x_0)|}{|H_0|}\geq \a_{y_0+H}+\e/2.\]
Set $y_1:=y_0+x_0$, so that $\alpha_{y_1+H_0}\geq \a_{y_0+H}+\e/2$. If $y_1$ is not $\epsilon$-uniform for $A$ with respect to $H_0$, the same Fourier argument implies there is $x_1\in G$ and a subspace $H_1\leqslant H_0$ of codimension 1 in $H_0$ such that
\[\frac{|(A-y_1)\cap(H_1+x_1)|}{|H_1|}\geq \a_{y_1+H_0}+\e/2\geq \alpha_{y_0+H}+\e.\]
Set $y_2:=y_1+x_1$ and note that the inequality above implies $\alpha_{y_2+H_1}\geq \alpha_{y_0+H}+\e$. Iterating this procedure and using the fact that the relative density of $A$ on a subspace cannot exceed 1, we find that there exists a subspace $H'\leq H$ of codimension at most $\lfloor 2/\epsilon\rfloor$ in $H$ and $x\in G$ such that $x$ is $\epsilon$-uniform with respect to $H'$ and 
\[|A\cap (H'+x)|=|(A-x)\cap H'|\geq \alpha|H'|>\epgoodlemtoy{\epsilon}{k}|H'|.\]
 Note $H'$ has codimension at most $M+\lfloor 2/\epsilon\rfloor$ in $G$.  Consequently, by Proposition \ref{lem:goodlemtoy}, because $A$ is $k$-stable, $x$ is $\epsilon$-uniform, and $\epsilon<\boundgoodlemtoy{k}$, we must have either 
\[|(A-x)\cap H'|\leq \epgoodlemtoy{\epsilon}{k}|H'|\text{ or }|H'\setminus (A-x)|\leq \epgoodlemtoy{\epsilon}{k}|H'|.\]
Since $|(A-x)\cap H'| >\epgoodlemtoy{\epsilon}{k}|H'|$, it must be the case that $|H'\setminus (A-x)|\leq \epgoodlemtoy{\epsilon}{k}|H'|$, and consequently, $|(A-x)\cap H'|\geq (1-\epgoodlemtoy{\epsilon}{k})|H'|$.  Since $|(A-x)\cap H'|=|A\cap (H'+x)|$, we are done.
\end{proof}

As mentioned above, our construction of the order property shall proceed somewhat indirectly, as in \cite{Malliaris:2014go}. Indeed, we shall be constructing a large ``tree" inside the graph $\Gamma_A$, rather than a large instance of the order property.\footnote{This is not a tree in the graph-theoretic sense, see Definition \ref{def:treebound} below.} In order to do so we shall need some more notation. Let $\Gamma=(V,E)$ be a graph.  Given $x\in V$ and $A\subseteq V$, set $N(x):=\{y: xy\in E\}$ and 
 \begin{align*}
N^1(x)&:=N(x), \text{ }N^0(x):=V\setminus N(x), \text{ }A^1:=A\text{ and }A^0:=V\setminus A.
\end{align*}
Observe that in the case where $\Gamma=\Gamma(G,A)$ for some finite abelian group $G$ and $A\subseteq G$, we have that for each $i\in \{0,1\}$ and $x,y\in G$, $N^i(x):=A^i-x$, and $y+N^i(x)=N^i(x-y)$.  Given an integer $n\geq 1$, define $2^n:=\{0,1\}^n$ and
\begin{align*}
2^{<n}:=\bigcup_{i=0}^{n-1} \{0,1\}^i,
\end{align*}
where $\{0,1\}^0:=\langle$ $\rangle$ is the empty string. Given $\eta, \eta'\in 2^{<n}$, we say that $\eta \trianglelefteq \eta'$ if and only if $\eta=\langle$ $\rangle$ or $\eta$ is an initial segment of $\eta'$.  We will write $\eta \triangleleft \eta'$ to mean that $\eta \trianglelefteq \eta'$ and $\eta \neq \eta'$.  Given $\eta \in \{0,1\}^i$, let $|\eta|=i$ denote the length of $\eta$ (the length of the empty string $\langle$ $\rangle$ is $0$). Given $\eta \in 2^n$, and $i\in \{0,1\}$, $\eta\wedge i$ denotes the element of $2^{n+1}$ obtained by adding $i$ to the end of $\eta$.  If $\eta=(\eta_1,\ldots, \eta_n) \in 2^n$ and $1\leq i\leq n$, let $\eta |_i:=(\eta_1,\ldots, \eta_i)$, and $\eta(i):=\eta_i$.  By convention, $\eta|_0:=\langle \rangle$ and $2^{<0}:=\emptyset$.  

\begin{definition}[Tree bound]\label{def:treebound}
Given a graph $\Gamma=(V,E)$, the \emph{tree bound} for $\Gamma$, denoted by $d(\Gamma)$, is the least integer $d$ such that there do not exist sequences $\langle a_{\eta}: \eta \in 2^d\rangle$, $\langle b_{\rho}: \rho \in 2^{<d}\rangle$ of elements of $V$ with the property that for each $\eta\in 2^d$ and $\rho\in 2^{<d}$, if $\rho \triangleleft \eta$, then $a_{\eta}b_{\rho}\in E$ if and only if $\rho\wedge 1\trianglelefteq \eta$.
\end{definition}

The following result, which appears in \cite{Wilfrid:1993wx}, relates the degree of stability of the graph to its tree bound. 

\begin{theorem}[Stability bounds height of trees]\label{thm:treefact}
For each integer $k$ there exists $d=d(k)<2^{k+2}-2$ such that if $\Gamma$ is a $k$-stable graph, then its tree bound $d(\Gamma)$ is at most $d$.  
\end{theorem}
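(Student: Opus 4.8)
The plan is to prove the contrapositive: I will show that if $\Gamma$ contains a tree of some height $d=d(k)$ with $d(k)<2^{k+2}-2$, then $\Gamma$ has the $k$-order property; hence a $k$-stable graph contains no such tree and $d(\Gamma)\le d(k)$. The first thing to notice is that the defining condition of a tree (Definition~\ref{def:treebound}) only constrains a pair $(a_\eta,b_\rho)$ when $\rho\triangleleft\eta$: along a single branch $\eta$ the vertices $b_{\langle\rangle},b_{\eta|_1},b_{\eta|_2},\dots$ together with $a_\eta$ realise a triangular adjacency pattern (namely $a_\eta b_{\eta|_i}\in E$ exactly when $\eta$ turns ``$1$'' at level $i$), but nothing at all is forced for pairs in which $\rho$ is not an initial segment of $\eta$. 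So one cannot simply read an order-property instance off a single branch, and $d$ must be taken exponential in $k$ with a recursive extraction.

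For the recursion I would split the tree at its root $b_{\langle\rangle}$ into its two height-$(d-1)$ subtrees $T_0,T_1$ (the leaves whose first coordinate is $0$, respectively $1$). Since $b_{\langle\rangle}$ is non-adjacent to every leaf of $T_0$ and adjacent to every leaf of $T_1$, it acts as a separator between the two halves. Applying the inductive hypothesis to $T_1$ yields an order-property instance of some length $m$ whose $a$-vertices are leaves of $T_1$; adjoining $b_{\langle\rangle}$ to the $b$-side as the new largest-index vertex extends it to length $m+1$ — but only if the inductive hypothesis also hands us a ``padding'' leaf of $T_1$ that is non-adjacent to every one of the $m$ chosen $b$-vertices, since it is this leaf (adjacent to $b_{\langle\rangle}$ because it lies in $T_1$) that must play the role of the new largest-index $a$-vertex. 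One therefore runs the induction with a strengthened statement that also guarantees such a padding vertex (and, in the parallel case where $T_1$ produces a $\neg E$-instance instead, a padding vertex on the other side), each confined to the appropriate subtree so that the separator resolves precisely the pairs that the tree condition leaves free. Solving the resulting recursion $d(m+1)\le 2d(m)+O(1)$ from the small base case ($d=1$ already forces the $1$-order property) gives a bound of the form $d(k)<2^{k+2}-2$.

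The hard part will be exactly this bookkeeping: because the tree condition is silent about adjacencies between a leaf of one subtree and a node of the other, every vertex carried through the recursion — the order-property vertices and the padding vertices — must be tracked as living in a prescribed region of the tree, and the inductive statement has to be strong enough that when one half is spliced onto the other across the separator no uncontrolled pair survives, while still being provable by the same recursion. (This is precisely the relationship between the binary-tree rank and the order property of a formula established in~\cite{Wilfrid:1993wx}, which is why one may alternatively just invoke it.)
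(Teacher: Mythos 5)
First, a point of reference: the paper does not prove Theorem~\ref{thm:treefact} at all --- it is imported verbatim from \cite{Wilfrid:1993wx} --- so the ``alternative'' you mention in your last sentence, namely just invoking that reference, is literally what the paper does. There is no in-paper argument to compare yours against; your sketch has to be judged as a reconstruction of the standard proof from the literature.

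As such a reconstruction, your strategy is the right skeleton: the contrapositive, the observation that the tree condition only constrains a pair $(a_\eta,b_\rho)$ when $\rho\triangleleft\eta$, and the use of $b_{\langle\rangle}$ as a separator between $T_0$ and $T_1$ are all correct. But there is a genuine gap exactly where you flag one: the ``strengthened statement'' carrying the induction is never formulated, and it is not a routine strengthening. The padding leaf you need --- a leaf of $T_1$ non-adjacent to all $m$ previously chosen $b$-vertices --- cannot be extracted from the tree structure alone, since those $b$-vertices are internal nodes that need not lie on the branch of any single leaf, and for $\rho\not\triangleleft\eta$ the adjacency $a_\eta b_\rho$ is completely unconstrained. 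Nor can one sidestep this by arranging all pairs of the ladder to be tree-determined: if the nodes $\rho_1\triangleleft\dots\triangleleft\rho_m$ form a chain and every chosen leaf extends $\rho_m$, then for each $j<m$ the adjacency $a_{\eta_i}b_{\rho_j}$ equals $\rho_m(|\rho_j|+1)$ independently of $i$, which is incompatible with a ladder of length $m\geq 2$. So the induction must confront pairs whose adjacency is decided by the graph rather than by the tree, typically via a dichotomy (``either some leaf of this subtree has the required adjacency to $b_\rho$, and we extend, or $b_\rho$ is uniformly adjacent/non-adjacent to all leaves of the subtree, and we gain structure elsewhere''), run as a two-parameter induction over $E$-ladders and $\neg E$-ladders. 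Note also that the step you actually describe recurses into only one subtree, which would yield $d(m+1)\leq d(m)+1$, not the recursion $d(m+1)\leq 2d(m)+O(1)$ you then solve --- a sign that the inductive statement has not been pinned down. Until it is, the argument is incomplete, and citing \cite{Wilfrid:1993wx} as the paper does is the legitimate shortcut.
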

Recall that by Lemma \ref{lem:stillstable1}, if $A$ is $k$-stable and $B$ is $y$-stable for some $k,y\geq 2$, then $A\cap B$ is $h(k,y)$-stable, where $h(x,y):=(x+y)2^{x+y}+1$. Furthermore, if $y=2$, then by Lemma \ref{lem:2stable} $A\cap B$ is $k$-stable. 

Given $k\geq 2$, let $f_k(y)$ be the function in one variable defined by $f_k(y):=h(k+1,y)$. For $i\geq 1$, let $f_k^i(y)$ denote the function obtained by applying $f_k$ $i$ times. That is, for any integer $i\geq -1$, $f_k(f_k^i(y))=f_k^{i+1}(y)$, where by convention we let $f_k^{-1}$ and $f_k^0$ denote the constant functions $f_k^{-1}(y):=2$ and $f^0_k(y):=k$, respectively. It follows, and shall be important later, that for any $i\geq -1$, if $A$ is $k$-stable and $B$ is $f_k^i(k)$-stable, then $A\cap B$ and $(\neg A)\cap B$ are both $f_k^{i+1}(k)$-stable.
 
Equipped with these preliminary remarks, we are now able to state the technical version of our main theorem.

\begin{theorem}[Main theorem]\label{thm:mainthmff}
For all $\mu\in (0,1)$, $k\geq 2$, and every prime $p$, there is $n_0=n_0(k,\mu,p)$ such that the following holds. Let $d=d(k)$ be as in Theorem \ref{thm:treefact}, and set $D:=f_k^{d}(k)$ where $f_k(y):=h(k,y)$ is as in Lemma \ref{lem:stillstable1}. Let $G:=\F_p^n$ with $n\geq n_0$, and suppose that $A\subseteq G$ is $k$-stable.  Then there exists a subspace of $H\leqslant G$ which is $\mu$-good for $A$ and which has codimension at most $\max\{d\lfloor 2(4/\mu)^{2D}\rfloor, d\lfloor 2(4)^{D^2}\rfloor\}$.
\end{theorem}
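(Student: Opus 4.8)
The plan is to argue by contradiction: assuming no subspace of the allowed codimension is $\mu$-good for $A$, we build a "tree" in $\Gamma_A$ of height $d+1 > d(k) = d(\Gamma_A)$, contradicting Theorem~\ref{thm:treefact}. The tree is constructed level by level. At each level we will have a nested chain of affine subspaces (cosets of progressively smaller subspaces), and at each node we will exploit the failure of goodness to split the current piece into a part where $A$ is relatively dense and a part where $\neg A$ is relatively dense. Concretely, I would maintain at stage $i$ a subspace $H_i$ of codimension roughly $i\lfloor 2(4/\mu)^{2D}\rfloor$ (or the $4^{D^2}$ variant, depending on which regime we are in) together with, for each $\eta \in 2^i$, an element $a_\eta \in G$ such that the coset $a_\eta + H_i$ is almost entirely inside $A^{\eta(1)} \cap (A - b_{\eta|_1})^{\eta(2)} \cap \cdots$, the intersection of the translated sides picked out by $\eta$ along the branch. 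The point of keeping the relative density of the relevant intersection close to $1$ on $a_\eta + H_i$ is twofold: first, Lemma~\ref{lem:stillstable1} (together with Lemma~\ref{lem:2stable} and the $f_k^i$ bookkeeping set up just before the theorem) guarantees these finite intersections of translates of $A$ and $\neg A$ remain $D$-stable, so Proposition~\ref{prop:keylemtoy} and Proposition~\ref{lem:goodlemtoy} apply with uniform parameters throughout; second, density close to $1$ is exactly what lets the tree-building continue.

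The inductive step has two sub-steps. Given the node $a_\eta + H_i$ on which the current intersection $B_\eta$ (a $D$-stable set, being a bounded intersection of translates of $A$ and $\neg A$) has relative density $> \epgoodlemtoy{\mu}{k}$, I first invoke Proposition~\ref{prop:keylemtoy}: passing to a subspace $H_{i+1} \leq H_i$ of codimension at most $\lfloor 2/\mu\rfloor$ in $H_i$ and translating appropriately, we find a coset on which $B_\eta$ has relative density $\geq 1 - \epgoodlemtoy{\mu}{k}$, i.e.\ $B_\eta$ contains almost all of that coset. Now, because by assumption $H_{i+1}$ is \emph{not} $\mu$-good for $A$ (its codimension is still within the budget), there is some translate $b_\eta$ witnessing this: the coset $b_\eta + H_{i+1}$ meets both $A$ and $\neg A$ in more than a $\mu$-fraction. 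Intersecting this "balanced" behaviour with the "almost full" coset we produced, and recording that after a further application of Proposition~\ref{prop:keylemtoy} to each of the two sides $B_\eta \cap (A - b_\eta)$ and $B_\eta \cap (\neg A - b_\eta)$ (both still $D$-stable by the $f_k$ bookkeeping, which is why we allow $d$ rounds and set $D = f_k^d(k)$), we obtain two children $a_{\eta \wedge 0} + H_{i+1}$ and $a_{\eta \wedge 1} + H_{i+1}$ on each of which the extended intersection again has relative density $\geq 1 - \epgoodlemtoy{\mu}{k} > \epgoodlemtoy{\mu}{k}$, ready for the next round. One has to be slightly careful with the arithmetic translation: using $y + N^i(x) = N^i(x-y)$ and $N^i(x) = A^i - x$ from the setup, membership of $a_\eta + h$ in $A^{\rho(|\rho|+1)} - b_\rho$ for $\rho \triangleleft \eta$ translates into the edge relation $a_\eta b_\rho \in E(\Gamma_A)$ holding iff $\rho \wedge 1 \trianglelefteq \eta$, which is exactly Definition~\ref{def:treebound}.

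After $d$ stages we have produced $\langle a_\eta : \eta \in 2^{d}\rangle$ together with $\langle b_\rho : \rho \in 2^{<d}\rangle$ forming a tree of height $d$ in the sense of Definition~\ref{def:treebound} inside $\Gamma_A$ --- actually I should run the construction to height $d$ where $d = d(k)$, so that the existence of such a tree contradicts Theorem~\ref{thm:treefact}, which says $d(\Gamma_A) \leq d(k)$ for a $k$-stable graph. To make the contradiction land I need the codimension of the final subspace $H_d$ to stay within $\max\{d\lfloor 2(4/\mu)^{2D}\rfloor, d\lfloor 2(4)^{D^2}\rfloor\}$: each stage costs at most one application of Proposition~\ref{prop:keylemtoy} plus one more to handle the split, and the quantity $2/\epsilon$ appearing there, with $\epsilon$ eventually chosen as a function of $\mu$ and $k$ to make $\epgoodlemtoy{\epsilon}{k} < 1/2$ and $\epgoodlemtoy{\cdot}{\cdot}$-values small enough, produces the $(4/\mu)^{2D}$ and $4^{D^2}$ shapes (the two cases correspond to whether $\mu$ is small relative to $k$ or not, i.e.\ which of the two quantities $(4/\mu)^{1/2k}$ versus the threshold $\boundgoodlemtoy{k}$ controls the choice of the internal $\epsilon$). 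The parameter $n_0$ is chosen large enough that the various $\nu$'s and $N$'s from Propositions~\ref{prop:keylemtoy} and~\ref{lem:goodlemtoy} are satisfied at every one of the $d$ (bounded) stages simultaneously, and so that all subspaces encountered remain large enough for Lemma~\ref{lem:trivregpairs} to bite.

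The main obstacle, and the place I would spend the most care, is the bookkeeping that keeps the stability degree of the intersected sets under control through all $d$ levels: at level $i$ the relevant set is an intersection of $i+1$ translated copies of $A$ or $\neg A$, so a naive application of Lemma~\ref{lem:stillstable1} would blow the degree up iteratively, which is exactly why $D$ is defined as the $d$-fold iterate $f_k^d(k)$ and why one must check that $f_k^i(k)$-stability is preserved when intersecting with one more translate of $A$ or $\neg A$ (the remark right before the theorem statement records precisely this). Getting the two codimension regimes to come out as stated --- rather than with worse constants --- also requires being deliberate about when Proposition~\ref{lem:goodlemtoy}/\ref{prop:keylemtoy} is invoked with $\epsilon$ a fixed function of $k$ versus a function of $\mu$, and tracking that the $\epgoodlemtoy{\epsilon}{k}$ from those propositions stays below $1/2$ so that "density $> \epgoodlemtoy{\epsilon}{k}$" and "density $\geq 1 - \epgoodlemtoy{\epsilon}{k}$" are genuinely complementary and the recursion closes.
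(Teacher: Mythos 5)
Your proposal follows essentially the same route as the paper: assume no subspace within the codimension budget is $\mu$-good, use the failure of goodness at each node to extract a splitting element, use Proposition \ref{prop:keylemtoy} together with the $f_k^i$ stability bookkeeping to restore density close to $1$ on a subcoset for each of the two children, and after $d$ levels read off a tree contradicting Theorem \ref{thm:treefact}. The one substantive deviation is that you keep a single subspace $H_i$ per level rather than one subspace $H_\eta$ per node as the paper does; taken literally this forces you to intersect the roughly $2^i$ subspaces produced by Proposition \ref{prop:keylemtoy} at level $i$, inflating the total codimension from $dm$ to about $2^{d+1}m$ (still polynomial in $1/\mu$, but missing the stated bound) --- the fix is simply to let each branch of the tree carry its own nested chain of subspaces, so that each application of Proposition \ref{prop:keylemtoy} costs only $\lfloor 2/\epsilon\rfloor$ along any single branch.
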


\begin{proof} We begin with some reductions and observations. Set $\epsilon:=\min\{(\mu/4)^{2D}, \epboundfact\}$, $m:=\lfloor 2/\epsilon \rfloor$, and $n_0:=\max\{\nu(f^t(k),\epsilon, mt,p): 0\leq t\leq d\}$, where $\nu(f^t(k),\epsilon, mt,p)$ is given by Proposition \ref{prop:keylemtoy} and we drop the subscript on $f$ for clarity.  Assume that $n\geq n_0$, and that $A\subseteq G:=\F_p^n$ is $k$-stable.  Observe that since $\epsilon \leq (\mu/4)^{2D}$, any $4\epsilon^{1/2D}$-good subspace is also $\mu$-good.  

Consequently, it suffices to find a subspace $H$ which is $\epmainthmff{\epsilon}{D}$-good for $A$ and which has codimension at most $dm$. We shall use throughout that if $0\leq t < d$, then $f^t(k)<D$. We shall also use the fact that this, along with our assumption on $\epsilon$, implies that $\epsilon<(1/f^t(k))^6$ for all $0\leq t\leq d$. Finally, we invite the reader to verify the following fact.
\begin{align}\label{fact000}
\text{If $D>v,u\geq 2$ are integers and $0<\epsilon<\epboundfact$, then $\epsilon^{1/2D}-\epsilon^{1/2u}>\epsilon^{1/2v}$.}
\end{align}

Suppose towards a contradiction that there is no subspace of codimension at most $dm$ which is $\epmainthmff{\epsilon}{D}$-good for $A$.  We will now simultaneously construct four sequences, each indexed by $2^{\leq d}$, as follows.
\begin{enumerate}[(a)]
\item $\langle H_{\eta}: \eta\in 2^{ \leq d}\rangle$, where each $H_{\eta}\leq G$;
\item $\langle g_{\eta}: \eta\in 2^{\leq d}\rangle$ and $\langle x_{\eta}: \eta\in 2^{\leq d} \rangle$, where each $g_{\eta}$ and $x_{\eta}$ are elements of $G$; 
\item $\langle X_{\eta}: \eta\in 2^{\leq d}\rangle$, where each $X_{\eta}\subseteq G$.
\end{enumerate}
These sequences will satisfy the following for each $0\leq t\leq d$, and all $\eta \in 2^t$:
\begin{enumerate}[(i)]
\item $H_{\eta}\leq G$ has codimension at most $mt$; \label{parameterbounds}
\item \label{ghypp} for each $i\in \{0,1\}$, $|N^i(g_{\eta})\cap H_{\eta}|\geq \epmainthmff{\epsilon}{D}|H_{\eta}|$;
\item \label{largeintersection} $|X_{\eta}\cap H_{\eta}|\geq (1-\epmainthmff{\epsilon}{f^{t-1}(k)})|H_{\eta}|$;
\item $X_{\eta}$ is $f^{t-1}(k)$-stable in $G$;\label{stabilityhypothesis}
\item if $\eta=\sigma \wedge i$, $X_{\eta}=N^i(g_{\sigma}+x_{\sigma\wedge i})\cap (X_{\sigma}-x_{\sigma \wedge i})$;\label{defofset}
\item for all $s<t$, and $\sigma \in 2^{s}$ satisfying $\sigma \triangleleft \eta$, the following holds for all $x\in X_{\eta}$:\label{importanthypothesis}
\begin{align}\label{5}
x+g_{\sigma}+ x_{\eta|_{s+1}}+\ldots+x_{\eta|_{t}} \in A \Leftrightarrow \eta(s+1)=1.
\end{align}
\end{enumerate}
We proceed by induction on $t$. For the base case $t=0$, set $x_{<>}:=0$, $g_{<>}:=0$, $H_{<>}:=G$, and $X_{<>}:=G$.  Note that $X_{<>}$ is $2=f^{-1}(k)$-stable. Since $G$ is not $\epmainthmff{\epsilon}{D}$-good for $A$, for each $i\in \{0,1\}$, $|N^i(g_{<>})\cap X_{<>}|\geq \epmainthmff{\epsilon}{D}|H_{<>}|$. It is now straightforward to see that (\ref{parameterbounds})-(\ref{importanthypothesis}) hold for all $\eta\in 2^0=\{<>\}$ (note that (\ref{defofset}) and (\ref{importanthypothesis}) are vacuous).  

Suppose now that $0\leq t<d$ and assume we have inductively constructed $\langle H_{\eta}: \eta\in 2^{\leq t}\rangle$, $\langle g_{\eta}: \eta\in 2^{\leq t}\rangle$, $\langle x_{\eta}: \eta\in 2^{\leq t}\rangle$, and $\langle X_{\eta}: \eta\in 2^{\leq t}\rangle$ such that (\ref{parameterbounds})-(\ref{importanthypothesis}) hold for all $\eta \in 2^t$.  We now show how to extend the sequences by defining $H_{\eta\wedge i}, g_{\eta \wedge i}, x_{\eta\wedge i}, X_{\eta\wedge i}$ for each $\eta \in 2^t$ and $i\in \{0,1\}$.  Fix $\eta \in 2^{t}$.  Observe that our induction hypotheses (\ref{ghypp}) and (\ref{largeintersection}) imply that for each $i\in \{0,1\}$,
\begin{align}\label{large1*}
|N^i(g_{\eta})\cap X_{\eta}\cap H_{\eta}|\geq(\epmainthmff{\epsilon}{D}-\epmainthmff{\epsilon}{f^{t-1}(k)})|H_{\eta}|> \epmainthmff{\epsilon}{f^t(k)}|H_{\eta}|,
\end{align}
where the last inequality is a consequence of (\ref{fact000}).  For each $i\in \{0,1\}$, because $A$ is $k$-stable, Lemma \ref{lem:stillstable} implies that $N^i(g_{\eta})=A^i-g_{\eta}$ is $(k+1)$-stable. By our induction hypothesis (\ref{stabilityhypothesis}), $X_{\eta}$ is $f^{t-1}(k)$-stable.  Consequently, for each $i\in \{0,1\}$, $N^i(g_{\eta})\cap X_{\eta}$ is $h(k+1,f^{t-1}(k))=f^{t}(k)$-stable.

Combining the fact that $N^i(g_{\eta})\cap X_{\eta}$ is $f^{t}(k)$-stable with (\ref{large1*}), the fact that $\epsilon<\boundkeylemtoy{f^t(k)}$, and the fact that $|G|\geq \nu(f^t(k),\epsilon, mt,p)$, we see that Proposition \ref{prop:keylemtoy} implies that for each $i\in \{0,1\}$, there is a subspace $H_{\eta \wedge i}$ of $H_{\eta}$ and $x_{\eta \wedge i}\in G$ such that $H_{\eta \wedge i}$ has codimension at most $m$ in $H_{\eta}$ and
\begin{align}
|(N^i(g_{\eta})&\cap X_{\eta}) \cap (H_{\eta \wedge i}+x_{\eta \wedge i})|\geq (1-\epkeylemtoy{\epsilon}{f^t(k)})|H_{\eta \wedge i}|.\label{large7*}
\end{align}
For each $i\in \{0,1\}$, set $X_{\eta \wedge i}:=(N^i(g_{\eta})\cap X_{\eta})-x_{\eta\wedge i}=N^i(g_{\eta}+x_{\eta \wedge i})\cap( X_{\eta}-x_{\eta \wedge i} )$.  Observe that by our inductive hypothesis (\ref{parameterbounds}) on $H_{\eta}$, for each $i\in \{0,1\}$ the codimension of $H_{\eta \wedge i}$ in $G$ is at most $m+mt=m(t+1)$, establishing (\ref{parameterbounds}). Since $m(t+1)\leq md$, $H_{\eta\wedge i}$ cannot be $\epmainthmff{\epsilon}{D}$-good.  Consequently,  there is $g_{\eta\wedge i}\in G$ such that for each $j\in \{0,1\}$, $|N^j(g_{\eta\wedge i})\cap H_{\eta\wedge i}|> \epmainthmff{\epsilon}{D}|H_{\eta\wedge i}|$.  This completes our construction of $H_{\tau}, x_{\tau}, g_{\tau}, X_{\tau}$ for each $\tau \in 2^{t+1}$. 

We now show that conditions (\ref{ghypp})-(\ref{importanthypothesis}) hold for each $\tau\in 2^{t+1}$.  Fix $\tau\in 2^{t+1}$, so $\tau=\eta\wedge i$ for some $i\in \{0,1\}$ and $\eta \in 2^t$.  Observe that (\ref{defofset}) holds for $\tau$ by definition of $X_{\eta \wedge i}$, (\ref{largeintersection}) holds for $\tau$ by (\ref{large7*}), and (\ref{ghypp}) holds for $\tau$ by our choice of $g_{\eta \wedge i}$. For (\ref{stabilityhypothesis}), note that $X_{\tau}=(N^i(g_{\eta})\cap X_{\eta})-x_{\eta\wedge i}$ is just a translate of $N^i(g_{\eta})\cap X_{\eta}$.  We know that $N^i(g_{\eta})\cap X_{\eta}$ is $f^{t}(k)$-stable, so Lemma \ref{lem:stillstable} implies that $X_{\tau}$ is $f^{t}(k)$-stable.  It remains to check condition (\ref{importanthypothesis}).  Suppose that $s< t+1$, $\sigma \in 2^s$ is such that $\sigma \triangleleft \tau$, and $x\in X_{\tau}$.  We want to show that
\begin{align}\label{6*}
x+g_{\sigma}+ x_{\tau|_{s+1}}+\ldots+x_{\tau|_{t}}+x_{\tau} \in A \Leftrightarrow \tau(s+1)=1.
\end{align}
Suppose first that $s=t$, so $\sigma = \eta$ (recall that $\tau=\eta \wedge i$).  Hence, rewriting (\ref{6*}), our aim is to show that $x+g_{\eta}+x_{\tau} \in A \Leftrightarrow \tau(t+1)=1$. Since $\tau(t+1)=i$, this means that we need to show that $x+g_{\eta}+x_{\tau} \in A \Leftrightarrow i=1$.  Now by definition, $X_{\tau}\subseteq N^i(g_{\eta}+x_{\tau})$.  Thus, since $x\in X_{\tau}$, $x\in N^i(g_{\eta}+x_{\tau})$ implies $x+g_{\eta}+x_{\tau}\in A^i$. It follows that $x+g_{\eta}+x_{\tau}\in A$ if and only if $i=1$, as desired.

Suppose now that $s<t$.  Then $\sigma\triangleleft \eta$ and 
\begin{equation}
x+g_{\sigma}+ x_{\tau|_{s+1}}+\ldots+x_{\tau|_{t}}+x_{\tau}=x+g_{\sigma}+x_{\eta|_{s+1}}+\ldots +x_{\eta|_{t-1}}+x_{\eta}+x_{\tau}. \label{9}
\end{equation}
Since $x\in X_{\tau} = N^i(g_{\eta}+x_{\tau})\cap (X_{\eta}-x_{\tau}) $, we see that $x+x_{\tau}\in X_{\tau}+x_{\tau}\subseteq X_{\eta}$.  By our induction hypothesis (\ref{importanthypothesis}),
$$
(x+x_{\tau})+g_{\sigma}+ x_{\eta|_{s+1}}+\ldots+x_{\eta|_{t-1}}+x_{\eta} \in A \Leftrightarrow \eta(s+1)=1,
$$
and thus
$$
x+g_{\sigma}+ x_{\tau|_{s+1}}+\ldots+x_{\tau|_{t}}+x_{\tau} \in A \Leftrightarrow \eta(s+1)=1 \Leftrightarrow \tau(s+1)=1,
$$
where the last equivalence is a consequence of $\tau=\eta \wedge i$ and $s<t$.  This finishes our verification of (\ref{importanthypothesis}) for $\tau$.

We may henceforth assume that we have constructed sequences (a)-(c) satisfying properties (\ref{parameterbounds})-(\ref{importanthypothesis}). We now show that the tree bound $d(\Gamma_A)$ satisfies $d(\Gamma_A)>d$, contradicting the initial assumption that $A$ is $k$-stable.

In order to construct a tree of height $d$, for each $\eta \in 2^d$ choose $c_{\eta}\in X_{\eta}$ and set $a_{\eta}:=c_{\eta}+\sum_{\sigma\trianglelefteq \eta}x_{\sigma}$.  Let $b_{<>}:=g_{<>}$ and for each $0<s<d$ and $\sigma \in 2^{s}$, let $b_{\sigma}:= g_{\sigma}-x_{\sigma|_{1}}-\ldots -x_{\sigma|_{s-1}}-x_{\sigma}$.  Then for all $s<d$, $\sigma\in 2^s$, $\eta\in 2^d$, and $\sigma \triangleleft \eta$, we have
\begin{align*}
a_{\eta}+b_{\sigma}&=c_{\eta}+\sum_{\tau\trianglelefteq \eta}x_{\tau}+g_{\sigma}-x_{\sigma|_{1}}-\ldots -x_{\sigma|_{s-1}}-x_{\sigma}\\
&=c_{\eta}+\sum_{\tau\trianglelefteq \eta}x_{\tau}+g_{\sigma}-x_{\eta|_{1}}-\ldots -x_{\eta|_{s-1}}-x_{\eta|_s}\\
&=c_{\eta}+g_{\sigma}+x_{\eta|_{s+1}}+\ldots +x_{\eta|_{d}}.
\end{align*}
Since $c_{\eta}\in X_{\eta}$, (\ref{5}) with $t=d$ implies that $a_{\eta}+b_{\sigma}\in A$ if and only if $\eta(s+1)=1$.  Thus we have shown that if $\sigma \triangleleft \eta$, then $a_{\eta}+b_{\sigma}\in A$ if and only if $\sigma \wedge 1\trianglelefteq \eta$.  Definition \ref{def:treebound} now implies that $d(\Gamma_A)> d$ as claimed, concluding the proof.
\end{proof}

\section{Concluding remarks}\label{sec:remarks}

With Theorem \ref{thm:mainthmff} in hand, we are now able to give a quick proof of Corollary \ref{cor:ff}, which states that stable sets are closely approximated by a union of cosets of a subspace of bounded codimension.

\begin{proofof}{Corollary \ref{cor:ff}}
Let $n_1=n_0(k,\epsilon,p)$ be as in Theorem \ref{thm:mainthmff}. Let $d=d(k)$ be as in Theorem \ref{thm:treefact} and set $D:=f^d(k)$, where $f=f_k$ is as in Theorem \ref{thm:mainthmff}.  Then set $h(x):=2d(4/x)^{2D}+2d(4)^{D^2}$. Theorem \ref{thm:mainthmff} implies there is an $\epsilon$-good subspace $H$ of $G$ of codimension at most $ h(1/\epsilon)$.  Since $H$ is $\epsilon$-good for $A$, we have that for all $g\in G$, either $|(A-g)\cap H|=|A\cap (H+g)|\leq \epsilon |H|$ or $|(A-g)\cap H|=|A\cap (H+g)|\geq (1-\epsilon) |H|$.  Let $I:=\{g+H\in G/H: |A\cap (H+g)|\geq (1-\epsilon) |H|\}$, and let 
\begin{align*}
X:=\bigcup_{g+H\in I}(g+H)\text{ and }Y:=\bigcup_{g+H\in (G/H)\setminus I}(g+H). 
\end{align*}
Then by definition of $X$, $Y$, $I$ and $\epsilon$-goodness of $H$, we have that 
\[|A\setminus X|=|A\cap Y|\leq (\epsilon |H|)|(G/H)\setminus I|=\epsilon|H|(|G|/|H|-|I|)\]
and
\[|X\setminus A|\leq \epsilon |H||I|. \]
Thus $|A\Delta X|\leq \epsilon|H|(|G|/|H|-|I|)+\epsilon |H||I| = \epsilon |H|(|G|/|H|)=\epsilon |G|$ as desired.
\end{proofof}

For comparison, the Freiman-Ruzsa theorem \cite[Proposition 10.2]{Green:2005ic} states that $A$ can be efficiently covered by cosets of a not too large subspace in the case that $A$ itself has small doubling (see Corollary \ref{cor:cover}). Here, instead, we obtain that $A$ essentially consists of a union of cosets of a subspace that is not too small.

It is natural to ask whether the property of having an efficient regularity lemma is robust, for example with respect to symmetric differences. Our next corollary shows that this is indeed the case.
\begin{corollary}[Close to stable implies efficient regularity]\label{cor:close}
For all $k\geq 2$, $\epsilon>0$ and primes $p$, there is $n_2=n_2(k,\epsilon,p)$ and a polynomial $h(x)$, depending on $k$, such that for all $n\geq n_2$ the following holds.  Let $A,A'\subseteq G:=\F_p^n$ be subsets such that $A'$ is $k$-stable, $|A|/|G|\leq p^{-h(1/\epsilon)}$, and $|A\Delta A'|\leq \epsilon |A|$.  Then there is a subgroup $H\leqslant G$ of codimension at most $h(1/\epsilon)$ which is $3\epsilon$-uniform for $A$ with respect to $H$. 
\end{corollary}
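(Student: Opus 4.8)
The plan is to reduce to Theorem \ref{thm:mainthmff} applied to the nearby stable set $A'$, and then to control the Fourier-analytic cost of replacing $A'$ by $A$. Throughout we may assume $\epsilon<1/3$, since otherwise one can take $H=G$: every coefficient $\widehat{f^y_{G,A}}(t)$ has modulus at most $1$.

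First I would apply Theorem \ref{thm:mainthmff} to the $k$-stable set $A'$ with regularity parameter $\epsilon$, letting $n_2=n_2(k,\epsilon,p)$ be the threshold $n_0(k,\epsilon,p)$ it supplies. This produces a subspace $H\leqslant G$ that is $\epsilon$-good for $A'$ and has $\codim(H)\leq h(1/\epsilon)$, where $h(x):=2d(4/x)^{2D}+2d(4)^{D^2}$ with $d=d(k)$ and $D=f_k^{d}(k)$ — precisely the polynomial already used in the proof of Corollary \ref{cor:ff}. It is this same $h$ that goes into the hypothesis $|A|/|G|\leq p^{-h(1/\epsilon)}$.

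Next, I would record the elementary fact that for any $B\subseteq G$, any $y\in G$ and any $t\notin H^\perp$ one has $\widehat{f^y_{H,B}}(t)=|H|^{-1}\sum_{x\in H\cap(B-y)}\omega^{x\cdot t}$: the constant term drops out because $\mu_H$ is supported on $H$ and $\sum_{x\in H}\omega^{x\cdot t}=0$, while $\widehat{f^y_{H,B}}(t)=0$ when $t\in H^\perp$. Two consequences follow. (i) Since $H$ is $\epsilon$-good for $A'$, for every $y$ and $t$ we get $|\widehat{f^y_{H,A'}}(t)|\leq \epsilon$: in the sparse case bound the sum by $|(A'-y)\cap H|/|H|\leq\epsilon$, and in the dense case rewrite $\sum_{x\in H\cap(A'-y)}\omega^{x\cdot t}=-\sum_{x\in H\setminus(A'-y)}\omega^{x\cdot t}$ and bound by $|H\setminus(A'-y)|/|H|\leq\epsilon$. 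Thus $H$ is totally $\epsilon$-uniform for $A'$. (This is the sharp form of Lemma \ref{lem:exctototregtoy}; invoking the stated bound $\epsilon(p+1)$ instead would still work up to a constant, but would introduce a spurious $p$-dependence in $h$.) (ii) For every $y$ and every $t$,
\[\bigl|\widehat{f^y_{H,A}}(t)-\widehat{f^y_{H,A'}}(t)\bigr|\leq |H|^{-1}\bigl|(A-y)\,\Delta\,(A'-y)\bigr|=|A\Delta A'|/|H|.\]

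Finally I would use the density hypothesis to make this perturbation small. Since $\codim(H)\leq h(1/\epsilon)$ we have $|H|=p^{-\codim(H)}|G|\geq p^{-h(1/\epsilon)}|G|\geq |A|$, so $|A\Delta A'|/|H|\leq \epsilon|A|/|H|\leq\epsilon$. Combining with (i) and (ii), $\sup_{t\in\widehat{G}}|\widehat{f^y_{H,A}}(t)|\leq \epsilon+\epsilon\leq 3\epsilon$ for every $y\in G$; that is, $H$ is totally $3\epsilon$-uniform for $A$ (in particular every $y$, and so the value $y=0$, is $3\epsilon$-uniform for $A$ with respect to $H$), which is the claim. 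The one point requiring care — and the only real obstacle — is the double role of the polynomial $h$: it must simultaneously dominate the codimension delivered by Theorem \ref{thm:mainthmff} at parameter $\epsilon$ and be large enough that $|A|\leq p^{-h(1/\epsilon)}|G|$ forces $|A|\leq |H|$; the Corollary \ref{cor:ff} polynomial achieves both, which is why it reappears here.
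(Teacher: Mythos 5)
Your proposal is correct and follows essentially the same route as the paper: apply Theorem \ref{thm:mainthmff} to $A'$ to get an $\epsilon$-good subspace $H$ of codimension at most $h(1/\epsilon)$, observe that goodness gives total $\epsilon$-uniformity for $A'$, and then bound the Fourier perturbation $|\widehat{f^y_{H,A}}(t)-\widehat{f^y_{H,A'}}(t)|$ by $O(\epsilon|A|/|H|)\leq O(\epsilon)$ using the density hypothesis. Your write-up is in fact slightly more careful than the paper's (which asserts the "good implies $\epsilon$-uniform" step and the perturbation bound as straightforward, and uses the cruder constant $2\epsilon|A|/|H|$), but the argument is the same.
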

\begin{proof}
Let $n_2=n_0(k,\epsilon,p)$ be as in Theorem \ref{thm:mainthmff} and $h(x)$ as in Corollary \ref{cor:ff}.  Suppose that $n\geq n_2$ and that $A,A'\subseteq G= \mathbb{F}_p^n$ are such that $A'$ is $k$-stable and $|A\Delta A'|\leq \epsilon |A|$.  By Theorem \ref{thm:mainthmff}, there is a subspace $H\leqslant G$ which is $\epsilon$-good for $A'$ with codimension at most $h(1/\epsilon)$.  Since $H$ is $\epsilon$-good for $A'$, we have that every $y\in G$ is $\epsilon$-uniform for $A'$ with respect to $H$.  It is straightforward to check that $|A\Delta A'|\leq \epsilon |A|$ implies that for all $y\in G$ and $t \in \widehat{G}$, $|\widehat{f_{H,A}^y}(t)-\widehat{f_{H,A'}^y}(t)| \leq 2\epsilon |A|/|H|\leq 2\epsilon$, where the latter inequality holds provided that $A$ has density at most $p^{-h(1/\epsilon)}$ in $G$. It follows that $\sup_{t\in \widehat{G}}|\widehat{f_{H,A}^y}(t)|\leq 2\epsilon+\epsilon= 3\epsilon$ as desired.
\end{proof}

This shows that a set $A$ may have an efficient arithmetic regularity lemma while being itself rather unstable: consider $A:=A'\Delta B$, where $A'\subseteq G$ is a $k$-stable set whose size goes to infinity with $|G|$ but is bounded above by $p^{-h(1/\epsilon)}$, and $B$ is chosen at random from $G$ with probability $\epsilon|A'|/|G|$.  It is not difficult to see that there exists some function $r(n)\rightarrow \infty$ as $n\rightarrow \infty$ such that with high probability $A$ has the $r(|G|)$-order property, yet Corollary \ref{cor:close} asserts that it has an efficient regularity lemma. It would be interesting to understand ``how unstable'' a set $A$ must be in order to preclude the possibility of efficient regularity.  

In this context it is worth observing that Example 4 below, given by Green and Sanders \cite{Green:2015wp} to show that the arithmetic regularity lemma must allow for the existence of non-uniform elements, exhibits an instance of the order property that grows roughly as $\log|G|$. Specifically, the set $A\subseteq \mathbb{F}_3^n$ defined in (\ref{eq:badset}) below has the property that for any subspace $V\leqslant\mathbb{F}_3^n$ of positive dimension,
\[\sup_{t\notin V^\perp} |\widehat{f^0_{V,A}}(t)|\geq \frac{\sqrt{3}}{6},\]
in other words, $0$ is always non-uniform for $A$ with respect to $V$.

\begin{example}\label{exa:Ahasoder1}
The set $A\subseteq \F_3^n$ defined by
\begin{equation}\label{eq:badset}
A:=\{x\in \mathbb{F}_3^n: \text{ there exists $i$ such that $x_1=\ldots=x_i=0$ and $x_{i+1}=1\}$}.
\end{equation}
has the $(n-2)$-order property.

To see this, let $e_i$ denote the $i$ standard basis vector as usual.  For each $1\leq i\leq n-2$, set $a_i:=e_{i+1}$ and for each $1\leq j\leq n-2$, set $b_j:=2e_{j+2}+2e_{j+3}+\ldots +2e_{n}$.  Suppose that $i\leq j$.  Then $i+1\leq j+1<j+2$, and therefore
$$
a_i+b_j=e_{i+1}+2e_{j+2}+\ldots +2e_n=(0,\ldots,0,1,\ldots)\in A.
$$
On the other hand, if $i>j$, then $i+1\geq j+2$ and consequently,
\begin{align*}
a_i+b_j&=e_{i+1}+2e_{j+2}+\ldots+2e_{i+1}+\ldots +2e_n=(0,\ldots, 0,2,\ldots,0,\ldots,2)\notin A.
\end{align*}
\end{example}

Similarly, one might want to establish the existence of a large instance of the order-property in the example given by Green \cite{Green:2005kh} and Hosseini, Lovett and Moshkovitz \cite{Hosseini:2014tv}, which shows that the tower-type bound that arises naturally in the proof of the arithmetic regularity lemma is in fact necessary. Due to the technical complexity of this construction we shall refrain from doing so here, but it is not too difficult to convince oneself that it is not $k$-stable for any fixed $k$.

Not surprisingly, Theorem \ref{thm:mainthmff} implies a regularity lemma for the Cayley graph $\Gamma(G,A)$, where $A\subseteq G=\mathbb{F}_p^n$. It is informative to compare its conclusions to those of the stable graph regularity lemma of \cite{Malliaris:2014go}, and we shall do so below.

\begin{corollary}[Stable regularity in the Cayley graph]\label{cor:mainffreformulation}
For all $k\geq 2$, $\epsilon \in (0,1)$, and primes $p$, there is $n_3=n_3(k,\epsilon, p)$ and $C=C(k)\leq \max\{2d(4/\epsilon)^{2D}, 2d(4)^{D^2}\}$ such that the following holds for all $n\geq n_3$.  If $A\subseteq \mathbb{F}_p^n$ is $k$-stable and $\Gamma_A$ is its Cayley graph, then there is a partition $\{V_1,\ldots, V_M\}$ of $V(\Gamma_A)$ such that $M\leq C$, such that the $V_i$ are exactly the cosets of a subspace of $G$, and such that for each $1\leq i,j\leq M$, there is $t(i,j)\in \{0,1\}$ so that for all $z\in V_i$, $|N^{t(i,j)}(z)\cap V_j|\geq (1-\epsilon)|V_j|$.
\end{corollary}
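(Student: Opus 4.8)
The plan is to deduce Corollary \ref{cor:mainffreformulation} almost immediately from Theorem \ref{thm:mainthmff}. First I would set $n_3 := n_0(k,\epsilon,p)$ and let $D = f^d(k)$ as in Theorem \ref{thm:mainthmff}, so that Theorem \ref{thm:mainthmff} supplies a subspace $H \leqslant G$ which is $\epsilon$-good for $A$ and has codimension at most $\max\{d\lfloor 2(4/\epsilon)^{2D}\rfloor, d\lfloor 2(4)^{D^2}\rfloor\}$. Take the partition $\{V_1,\ldots,V_M\}$ to be precisely the collection of cosets of $H$ in $G$; then $M = [G:H] = p^{\mathrm{codim}(H)} \leq p^{\max\{2d(4/\epsilon)^{2D},2d(4)^{D^2}\}}$, so the bound $C = C(k)$ in the statement is forced to be (at most) this quantity. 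I should double-check that the bound claimed in the corollary statement, namely $C \le \max\{2d(4/\epsilon)^{2D},2d(4)^{D^2}\}$, is consistent with $M$ being the \emph{number of cosets} rather than the codimension; if there is a discrepancy this is just a matter of reconciling the exponent bookkeeping, and I would state $C$ as the actual number of cosets $p^{\mathrm{codim}(H)}$ and note it is bounded by a function of $k$ and $\epsilon$ of the stated shape.

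Next I would translate the $\epsilon$-goodness of $H$ into the desired statement about the Cayley graph $\Gamma_A = \Gamma(G,A)$. Recall from the discussion preceding Definition \ref{def:treebound} that for $z, w \in G$ and $i \in \{0,1\}$ we have $N^i(z) = A^i - z$ and $w + N^i(z) = N^i(z-w)$; in particular $N^1(z) = \{w : z + w \in A\}$. Fix parts $V_i = H + g_i$ and $V_j = H + g_j$. For any $z \in V_i$, write $z = g_i + h$ with $h \in H$; then for $w = g_j + h'$ with $h' \in H$ we have $z + w = (g_i + g_j) + (h + h')$, and as $h'$ ranges over $H$ so does $h + h'$. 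Hence $|N^1(z) \cap V_j| = |\{h' \in H : (g_i+g_j) + h + h' \in A\}| = |(A - (g_i+g_j+h)) \cap H| = |(A - y)\cap H|$ where $y := g_i + g_j + h$. Since $H$ is $\epsilon$-good for $A$, for \emph{this particular} $y$ we have either $|(A-y)\cap H| \le \epsilon|H|$ or $|H \setminus (A-y)| \le \epsilon |H|$, i.e. either $|N^1(z) \cap V_j| \le \epsilon |V_j|$ or $|N^0(z)\cap V_j| \le \epsilon |V_j|$.

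The one genuine point requiring an argument — and the step I expect to be the only real obstacle, though a mild one — is that the ``side'' $t(i,j) \in \{0,1\}$ can be chosen \emph{uniformly in $z \in V_i$}, rather than depending on $z$. Equivalently, I must show that the density $|A \cap (H + y)|/|H|$ is the same (either all $\le \epsilon$, or all $\ge 1-\epsilon$) as $y$ ranges over the coset $g_i + g_j + H$. This follows provided $\epsilon < 1/2$: if for some $h_1 \in H$ we had $|A \cap (H + g_i+g_j+h_1)| \le \epsilon|H|$ while for some $h_2 \in H$ we had $|A \cap (H+g_i+g_j+h_2)| \ge (1-\epsilon)|H|$, that is no contradiction by itself since these are genuinely different cosets — so in fact the cleaner route is: the coset $g_i+g_j+H$ equals $H + (g_i+g_j)$, a \emph{single} coset of $H$, so $y$ ranging over $g_i+g_j+h$ for $h \in H$ gives only \emph{one} coset $H + g_i+g_j$, and $|(A-y)\cap H| = |A \cap (H+y)| = |A \cap (H + g_i + g_j)|$ is literally independent of $h$. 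Thus set $t(i,j) := 1$ if $|A \cap (H + g_i+g_j)| \ge (1-\epsilon)|H|$ and $t(i,j):=0$ if $|A \cap (H+g_i+g_j)| \le \epsilon|H|$ (exactly one holds by $\epsilon$-goodness), and then for \emph{every} $z \in V_i$ we get $|N^{t(i,j)}(z) \cap V_j| \ge (1-\epsilon)|V_j|$, which is precisely the conclusion. I would close by remarking that this partition has all the features of the Malliaris--Shelah stable graph regularity lemma (polynomial-in-$1/\epsilon$ number of parts, no irregular pairs, densities near $0$ or $1$) with the extra structural bonus that the parts are cosets of a subspace.
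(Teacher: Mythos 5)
Your proof is correct and follows essentially the same route as the paper's: apply Theorem \ref{thm:mainthmff} to obtain an $\epsilon$-good subspace $H$, take its cosets as the parts, and note that for $z=g_i+h\in V_i$ the quantity $|N^1(z)\cap V_j|=|(A-g_i-g_j-h)\cap H|=|(A-g_i-g_j)\cap H|$ is independent of $h\in H$, so the side $t(i,j)$ can be chosen uniformly over $V_i$. Your remark that the stated bound on $C$ really bounds the codimension of $H$ rather than the number of parts $p^{\mathrm{codim}(H)}$ identifies a genuine bookkeeping slip that the paper's own proof also glosses over.
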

\begin{proof}
Let $d=d(k)$ be as in Theorem \ref{thm:treefact} and set $D:=f^d(k)$ as before. Let $n_3=n_0(k,\epsilon,p)$ as in Theorem \ref{thm:mainthmff}.  Theorem \ref{thm:mainthmff} implies there is an $\epsilon$-good subspace $H$ of $G$ of codimension at most $ \max\{2d(4/\epsilon)^{2D}, 2d(4)^{D^2}\}$. We need to show that the cosets of $H$ form a partition with the desired property.  

Fix two cosets $H+x$ and $H+y$.  Our aim is to obtain $t=t(x,y)\in \{0,1\}$ such that for all $z\in H+x$, $|N^t(x)\cap (H+y)|\leq \epsilon |H|$. Because $H$ is $\epsilon$-good, either $|(A-x-y)\cap H|\leq \epsilon |H|$ or $|H\setminus (A-y-x)|\leq \epsilon |H|$.  Suppose first that $|(A-x-y)\cap H|\leq \epsilon |H|$. Set $t:=1$ and fix $z\in H+x$, so that $z=h+x$ for some $h\in H$.  Then 
$$
|N^t(z)\cap (H+y)|=|(A-z)\cap (H+y)|=|(A-x-h-y)\cap H|= |(A-x-y)\cap H|\leq \epsilon |H|.
$$
If on the other hand, $|H\setminus (A-y-x)|\leq \epsilon |H|$, set $t:=0$ and perform an analogous argument.
\end{proof}

Since the parts in the partition described in Corollary \ref{cor:mainffreformulation} are the cosets of a subspace, they all have the same size.  It is also straightforward to check that, because $H$ is $\epsilon$-good, any pair of cosets $(H+x,H+y)$ is $\epsilon^{1/2}$-regular in $\Gamma(G,A)$ and has density in $[0,\epsilon]\cup [1-\epsilon]$.  Thus we obtain the main properties of the comparable stable graph regularity lemma of \cite[Theorem 5.18]{Malliaris:2014go}, namely the polynomial bound for the number of parts, the absence of irregular pairs, and the pairwise densities being close to $0$ or $1$.  

There are, however, a few differences between Corollary \ref{cor:mainffreformulation} and \cite[Theorem 5.18]{Malliaris:2014go}. First, the bound on the number of parts in \cite[Theorem 5.18]{Malliaris:2014go} is better than that given in Corollary \ref{cor:mainffreformulation}, the latter providing a bound of the form $O(\epsilon^{-2D})$, where $D$ is at least a tower of $2$'s of height $2^k$, while the former yields $O(\epsilon^{-2^k})$. Further, the parts in the regularity lemma of \cite{Malliaris:2014go} have a property called \emph{$\epsilon$-excellence} \cite[Definition 5.2]{Malliaris:2014go}, which is stronger than $\epsilon$-goodness.  On the other hand, the partition in Corollary \ref{cor:mainffreformulation} has two additional structural properties not present in \cite[Theorem 5.18]{Malliaris:2014go}.  First, the parts of the partition are the cosets of a subgroup and not arbitrary subgraphs.  Secondly, in Corollary \ref{cor:mainffreformulation}, we find that for each pair of parts $U, W$, there is $t(U,W)\in \{0,1\}$ such that \emph{for all} $z\in U$, $|N^{t(U,W)}(z)\cap W|\geq (1-\epsilon)|W|$.  In \cite[Theorem 5.18]{Malliaris:2014go} a slightly weaker property is obtained: for each pair of parts $U,W$  there is $t(U,W)\in \{0,1\}$ such that \emph{for all but at most $\epsilon|U|$ many} $z$ in $U$, $|N^{t(U,W)}(z)\cap W|\geq (1-\epsilon)|W|$.  Whether these differences are essential features or mere by-products of the known proofs remains to be investigated.

\bibliography{stablebibi.bib}
\bibliographystyle{amsplain}

\end{document}